\documentclass[11pt]{article}
\usepackage{amsmath,amssymb,amsthm,mathtools}
\usepackage{enumitem}
\usepackage{hyperref}
\usepackage{geometry}
\usepackage{microtype}
\usepackage{mathrsfs}
\usepackage{graphicx}
\usepackage{caption}
\usepackage{booktabs}
\usepackage{tikz}
\usetikzlibrary{arrows.meta,calc}
\title{Time Structure in Infinite Extensive Games}
\author{Shravan Luckraz \\ Schoolf of Economics and CeDEx China, University of Nottingham Ningbo China}
\date{\today}

\newtheorem{definition}{Definition}[section]

\newtheorem{theorem}[definition]{Theorem}
\newtheorem{lemma}[definition]{Lemma}
\newtheorem{proposition}[definition]{Proposition}
\newtheorem{corollary}[definition]{Corollary}
\newtheorem{remark}[definition]{Remark}
\newtheorem{example}[definition]{Example}

\begin{document}
\maketitle

\begin{abstract}
We extend graph‑theoretic characterizations of time‑structurable extensive games from finite informational digraphs to countable and continuum digraphs built on continuum rooted real trees. Introducing the symmetric‑class quotient poset of information sets as the canonical object an external clock must order, we develop a theory of transitive‑reduction for these quotients and prove existence and uniqueness under an interval‑finiteness condition. Under natural topological regularity assumptions like closed reachability classes, order‑density, and sigma‑compactness, we give necessary and sufficient conditions for real‑valued time labelings and construct measurable selection procedures in Polish spaces. We also provide constructive algorithms and a differential game example illustrating applications and computational implications.
\end{abstract}

\section{Introduction}
This paper extends graph‑theoretic characterizations of time‑structurable extensive games from finite informational digraphs to countable and continuum informational digraphs built on continuum rooted real trees, thereby bridging discrete and continuum models of temporal structure in extensive games. While prior work has emphasized equivalences between sequence‑based and graph‑based representations (Kline-Luckraz (2011)and Kline-Luckraz (2016)) and finite characterizations of time structure (Luckraz (2022 and Luckraz-Pansera 2023)), we show how those finite insights lift to richer informational continua by making the quotient/equivalence viewpoint central. Motivating examples from the literature on imperfect recall and nonstandard information structures (Piccione and Rubinstein 1997; Aumann, Hart, and Perry 1997) illustrate why a canonical, orderable object is needed when an external clock is imposed on an informational digraph. Algorithmic and ordering foundations (Kahn 1962) further motivate a formal treatment that combines combinatorial, topological, and measurable‑selection techniques.

At the heart of our approach is the symmetric class quotient poset of information sets, which we introduce as the canonical object any external clock must order. This quotient compresses informational redundancy while preserving the reachability relations that determine feasible temporal orderings; it therefore isolates the structural constraints that a global time labeling must respect. We develop a full theory of transitive reductions for these quotient posets, proving existence and uniqueness under an interval‑finiteness hypothesis that generalizes familiar finiteness conditions from the discrete literature. These results show how classical notions of acyclicity and minimality translate into the continuum setting and why naive continuum generalizations can fail without additional structural hypotheses.

For the continuum representation, we identify and impose explicit topological regularity conditions, like closedness of reachability classes, order‑density of the quotient, and sigma‑compactness of underlying trees and we show these conditions are both natural in applications and essentially sharp for our representation theorems. Under these hypotheses we obtain necessary and sufficient conditions for the existence of real‑valued time labelings in countable and continuum informational digraphs and we thus construct measurable selection procedures in Polish spaces that yield Borel or measurable time labelings when they exist. Alongside these existence results we provide constructive algorithms for recursively presented graphs and identify modeling pathologies that obstruct time structure.

Finally, the paper sharpens existing results on continuum representation statements by making topological hypotheses explicit. Our work also relates to computational complexity and equilibrium representations (von Stengel and Forges 2008) and to recent work on absorbing and clocked games (Hansen, Ibsen, and Neyman 2021). A detailed differential game example illustrates how continuum rooted real trees and symmetric equivalence class quotients arise naturally in continuous time strategic settings. 	Our results can also be used in pursuit-evasion games on graphs under imperfect visibility (Luckraz (2019), Luckraz (2023)).

The remainder of the paper is organized as follows: Section 2 sets notation and provides relevant background; Section 3 defines the quotient poset and states the main structural theorems; Sections 4–6 develop transitive‑reduction, labeling, and measurable‑selection results; Section 7 gives algorithms and diagnostics; Section 8 presents the differential‑game example; and Section 9 discusses implications and open problems.
This paper (i) formalizes collapsing cycles of length one and two into
equivalence classes, (ii) defines the quotient reachability order and
analyzes acyclicity there, (iii) extends the finite characterization to
countable and continuum informational digraphs, and (iv) provides
measurable and continuous representation theorems for Polish and
sigma‑compact informational spaces. We include algorithms, diagnostics,
and a worked differential‑game example.

\section{Model and notation}

\subsection{Informational digraph and reachability}
We follow the OR extensive form framework of Osborne and Rubinstein (1994), Kline Luckraz (2016) and the informational digraph definition of Pansera and Luckraz (2022).
Formally, let \(\mathcal H\) be the set of histories, \(\mathcal H^D\) decision
histories, \(N\) players, and \(\mathcal I=\bigcup_{i\in N}\mathcal I_i\)
the Kuhn information partition. For \(I',I''\in\mathcal I\) define the
precedence relation \(\prec\) by \(I'\prec I''\) iff there exist
histories \(h'\in I',h''\in I''\) with \(h'\) a proper initial segment of
\(h''\). The  informational digraph is \(G=(V,E)\) with \(V=\mathcal I\)
and \(E=\{(I',I''):I'\prec I''\}\).
\begin{definition}[Reachability]
For \(I,J\in\mathcal I\) write \(I R_G J\) if there exists a directed walk
from \(I\) to \(J\) in \(G\). The reachability relation \(R_G\) is the
transitive closure of \(E\) together with reflexivity.
\end{definition}

\begin{definition}[Transitive reduction]
A directed graph \(R=(X,F)\) is a \emph{transitive reduction} of the
reachability digraph \((X,\prec)\) if the transitive closure of \(R\)
equals \(\prec\) and \(F\) is minimal with this property.
\end{definition}

\begin{definition}[Cover relation]
For \(x,y\in X\) with \(x\prec y\) say \(y\) \emph{covers} \(x\) (write
\(x\lessdot y\)) if there is no \(z\in X\) with \(x\prec z\prec y\).
The cover relation \(C\subseteq X\times X\) is the set of all cover pairs.
\end{definition}

\begin{definition}[Interval‑finite poset]
The quotient poset \((X,\preceq)\) is \emph{interval‑finite} if for every
\(x\prec y\) the interval \([x,y]\) is finite.
\end{definition}

\begin{theorem}[Transitive reduction under interval‑finiteness]
If \((X,\preceq)\) is interval‑finite then the cover relation \(C\)
exists, the transitive reduction exists, is unique, and equals \(C\).
\end{theorem}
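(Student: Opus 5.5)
The plan is to prove two inclusions: that the transitive closure of $C$ is all of $\prec$, and that any $F$ whose transitive closure equals $\prec$ must contain $C$. Together these show that $C$ is a transitive reduction, and that it is the only one. Existence of $C$ as a set is trivial, since it is defined by a first‑order condition on pairs. The real content of ``exists'' is that $C$ is large enough to generate $\prec$, which is exactly what fails for dense orders such as $\mathbb Q$.

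\textbf{Step 1: $C$ generates $\prec$.} Fix $x\prec y$. By interval‑finiteness the interval $[x,y]$ is a finite poset. I will induct on $|[x,y]|$. If no $z$ satisfies $x\prec z\prec y$, then $x\lessdot y$ and we are done. Otherwise pick such a $z$. Both $[x,z]$ and $[z,y]$ are proper subsets of $[x,y]$: the first omits $y$ and the second omits $x$, by antisymmetry and $x\neq y$. The induction hypothesis then gives cover chains from $x$ to $z$ and from $z$ to $y$, and concatenating them gives a chain from $x$ to $y$. Hence the transitive closure of $C$ contains $\prec$. Conversely $C\subseteq\prec$ and $\prec$ is transitive, so the transitive closure of $C$ is contained in $\prec$. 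This gives equality. Here $\prec$ denotes the strict part of the partial order $\preceq$ on the quotient; it is a genuine strict order there because the quotient has collapsed the symmetric classes.

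\textbf{Step 2: every generating set contains $C$.} Let $F$ be any relation whose transitive closure is $\prec$, and let $x\lessdot y$. Then $x\prec y$, so there is an $F$‑path $x=z_0,z_1,\dots,z_k=y$. Each edge of $F$ lies in $\prec$, so consecutive terms are strictly increasing, and every intermediate $z_i$ satisfies $x\prec z_i\prec y$. The cover condition forbids such elements, so $k=1$ and $(x,y)\in F$. Thus $C\subseteq F$.

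\textbf{Step 3: conclusion.} By Step 1, $C$ has transitive closure $\prec$. By Step 2, every $F$ with that property contains $C$. So $C$ is minimal, in the strongest sense that it is contained in every generating set. If $F$ is any minimal generating set, then $C\subseteq F$, and $C$ itself generates, so minimality forces $F=C$. This gives both existence and uniqueness.

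The only delicate point is Step 1, where the finiteness of intervals is what makes the induction terminate. I would also remark briefly that without this hypothesis the argument breaks, for instance in $\mathbb Q$, where $C=\emptyset$.
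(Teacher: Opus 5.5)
Your proof is correct, and its overall structure matches the paper's. Both arguments build a chain of covers inside each finite interval $[x,y]$. Both then observe that any generating set $F$ must contain every cover pair, since an $F$-path of length at least two would supply an intermediate element. So $C$ is the least generating set and hence the unique minimal one.

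The real difference is how the cover chain in Step 1 is obtained. The paper takes a chain $x=z_0\prec\cdots\prec z_m=y$ of \emph{minimal} length. It then claims each step is a cover, since otherwise one could insert an element and ``shorten'' the chain. As written this is backwards: inserting an element lengthens a chain, and the minimal-length chain is just $x\prec y$. The argument only works with a chain of \emph{maximal} length, which exists because every chain in $[x,y]$ has at most $|[x,y]|$ elements. Your strong induction on $|[x,y]|$ avoids that slip. You split at an intermediate $z$ and use antisymmetry to see that $[x,z]$ and $[z,y]$ are proper subintervals, which also makes explicit where finiteness enters.

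One caution concerns your remark that $\prec$ is a strict order ``because the quotient has collapsed the symmetric classes.'' This does not hold in general. $\sim_S$ only collapses cycles of length one and two, so a directed $3$-cycle survives as three mutually reachable classes. Take antisymmetry instead from the hypothesis that $(X,\preceq)$ is a poset, which is how the statement is phrased. Your Steps 1 and 2 both genuinely rely on it.
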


\begin{proof}
Same as finite case (Luckraz-Pansera (2019)) applied to each finite interval \([x,y]\).
\end{proof}

\section{Symmetric part, equivalence classes, and quotient order}\label{sec:quotient}
\subsection{Symmetric part allowing cycles of length one and two}
\begin{definition}[Symmetric generator]
Define the \emph{symmetric generator} relation \(S\subseteq V\times V\) by

\[
(u,v)\in S \quad\Longleftrightarrow\quad (u=v)\ \text{or}\ \big((u,v)\in E\ \text{and}\ (v,u)\in E\big).
\]

Thus \(S\) collects self‑loops and symmetric pairs (two‑cycles).
\end{definition}

\begin{definition}[Equivalence relation generated by cycles of length one and two]
Let \(\sim_S\) be the equivalence relation on \(V\) generated by \(S\):
the smallest equivalence relation containing \(S\). Concretely, \(u\sim_S v\)
iff there exists a finite sequence \(u=w_0,w_1,\dots,w_k=v\) with each
\((w_{j-1},w_j)\in S\).
\end{definition}

\begin{remark}
This equivalence groups vertices that are connected by chains of
self‑loops and symmetric pairs. It is strictly weaker than collapsing
entire strongly connected components (SCCs) when longer cycles exist;
we intentionally focus on cycles of length one and two because these are
the symmetric anomalies we allow while still treating longer directed
cycles as genuine obstructions to time structure unless they are
collapsed by further modeling choices.
\end{remark}

\subsection{Quotient set and induced relation}
\begin{definition}[Quotient set and induced reachability]
Let \(X:=V/\!\sim_S\) be the set of \(\sim_S\)-equivalence classes. For
classes \(x,y\in X\) define \(x\preceq y\) iff there exist representatives
\(u\in x\), \(v\in y\) with \(uR_G v\) (reachability in the raw graph).
Write \(x\prec y\) when \(x\preceq y\) and \(x\neq y\).
\end{definition}

\begin{proposition}[Well‑definedness]
The relation \(\preceq\) is well defined and antisymmetric on \(X\):
if \(x\preceq y\) and \(y\preceq x\) then \(x=y\).
\end{proposition}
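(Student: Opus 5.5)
The plan is to separate the two claims: well-definedness is formal, while antisymmetry needs the paper's intended restriction on longer cycles.

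For \emph{well-definedness} I will prove something slightly stronger: whether \(x\preceq y\) holds does not depend on which representatives are chosen. The key observation is that \(\sim_S\)-equivalent vertices are mutually reachable in \(G\). If \((u,v)\in S\), then either \(u=v\), or both \((u,v)\in E\) and \((v,u)\in E\); in either case \(uR_Gv\) and \(vR_Gu\). Induction along a chain \(u=w_0,\dots,w_k=v\) with \((w_{j-1},w_j)\in S\) then gives \(uR_Gv\) and \(vR_Gu\) whenever \(u\sim_S v\), using transitivity of \(R_G\). Now suppose \(uR_Gv\) for some \(u\in x\), \(v\in y\), and let \(u'\in x\), \(v'\in y\) be arbitrary. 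Then \(u'R_GuR_GvR_Gv'\), so \(u'R_Gv'\) by transitivity. Reflexivity and transitivity of \(\preceq\) follow the same way: transitivity because we may splice walks through any common representative of the middle class.

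For \emph{antisymmetry}, suppose \(x\preceq y\) and \(y\preceq x\). By the previous paragraph, any \(u\in x\) and \(v\in y\) satisfy \(uR_Gv\) and \(vR_Gu\). So there is a closed directed walk through \(u\) and \(v\), and the goal is to show \(u\sim_S v\). This requires the standing modeling hypothesis recorded in the Remark of Section~\ref{sec:quotient}: directed cycles of length \(\ge 3\) are not allowed, being genuine obstructions to time structure. Precisely, I will assume that every directed cycle of \(G\) has length at most two, or equivalently that every closed walk in \(G\) is made of self-loops and symmetric pairs. Under this hypothesis I argue by induction on the length of a shortest closed walk \(u\to\cdots\to v\to\cdots\to u\). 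Any closed walk decomposes into simple directed cycles. Each of these has length one or two, so each edge \((a,b)\) on it has its reverse \((b,a)\in E\), and hence \((a,b)\in S\). Following the walk from \(u\) to \(v\) therefore produces an \(S\)-chain, giving \(u\sim_S v\) and \(x=y\).

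The main obstacle is that antisymmetry is \emph{false} without such a hypothesis. A 3-cycle \(u\to v\to w\to u\) with no reverse edges gives singleton classes \(\{u\}\) and \(\{v\}\) that are mutually \(\preceq\)-related but distinct. So the proof must explicitly invoke, or add to the statement, the acyclicity-beyond-length-two assumption. Without it, the correct conclusion is only that \(\preceq\) is a well-defined preorder, whose antisymmetric quotient is the SCC condensation rather than \(X\). In the write-up I would state the hypothesis in the proposition before running the argument above. The only delicate technical point is the decomposition of a closed walk into simple cycles, which is routine for finite walks and applies here because reachability walks are finite even when \(V\) is countable or a continuum.
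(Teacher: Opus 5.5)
\textbf{Verdict: you are right and the paper is not.} Your well-definedness argument is the paper's own: representatives are joined by chains of $S$-steps, each an identity or a two-way edge, so walks can be spliced. You simply write it out more carefully.

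On antisymmetry, the paper's proof asserts that mutual reachability of representatives implies, ``by construction of $\sim_S$'', that the classes coincide. That step is false. The relation $\sim_S$ only records self-loops and two-cycles, whereas mutual reachability can be witnessed by a cycle of any length.

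Your 3-cycle is a valid counterexample, and the paper itself supplies one inside a genuine informational digraph. In the perfect-recall game of Figure~\ref{fig:trees3} one has $2.2\prec 1.2\prec 2.3\prec 1.3\prec 2.2$, with none of the reverse precedences. So these are four distinct singleton $\sim_S$-classes that are pairwise $\preceq$-related in both directions.

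The Remark of Section~\ref{sec:quotient} only says that longer cycles obstruct time structure; it does not exclude them from $G$. So the hypothesis really must be added to the statement, as you propose. The appendix avoids the issue by quotienting by mutual reachability, where antisymmetry is automatic; this is your SCC-condensation remark.

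\textbf{Refinements.} Under your hypothesis the argument is correct, with two refinements.

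First, your ``equivalently'' is inaccurate. ``Every directed cycle has length at most two'' is strictly stronger than ``every edge of every closed walk lies in $S$''. The complete symmetric digraph on $\{a,b,c\}$ contains the simple cycle $a\to b\to c\to a$, yet every edge has its reverse. Your proof only uses the weaker property, which you obtain from the stronger one by the cycle decomposition, so nothing breaks. The announced induction on walk length is also not needed.

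Second, by your own well-definedness step, $\preceq$ is antisymmetric on $X$ if and only if $uR_Gv$ and $vR_Gu$ imply $u\sim_S v$. Equivalently, every strongly connected component of $G$ is a single $\sim_S$-class. This condition is weaker still. Take symmetric pairs $a\leftrightarrow c$ and $c\leftrightarrow b$ plus a one-way edge $a\to b$: the condition holds, yet the closed walk $a\to b\to c\to a$ uses an edge outside $S$. Stating the proposition under this hypothesis gives an exact characterization rather than a sufficient condition.
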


\begin{proof}
Well‑definedness: if \(u,u'\in x\) and \(v,v'\in y\) and \(uR v\), then
because \(u\sim_S u'\) and \(v\sim_S v'\) and \(\sim_S\) is generated by
edges that are symmetric or reflexive, reachability from \(u'\) to \(v'\)
still holds (concatenate short symmetric steps). Antisymmetry: if
\(x\preceq y\) and \(y\preceq x\) then there exist representatives with
mutual reachability; by construction of \(\sim_S\) this implies the
classes coincide.
\end{proof}

\begin{remark}
We call \((X,\preceq)\) the \emph{reduced quotient poset}. All subsequent
ordering, transitive reduction, and time labeling statements refer to
\((X,\preceq)\).
\end{remark}

\section{Finite case: characterization}\label{sec:finite}
\begin{theorem}[Finite corrected characterization]
Let \(G=(V,E)\) be finite. Form the equivalence \(\sim_S\) generated by
self‑loops and symmetric pairs and the quotient poset \((X,\preceq)\).
Then \(G\) admits a time structure (a real labeling strictly increasing
along edges) iff the quotient poset \((X,\preceq)\) is acyclic and its
transitive reduction is unique. In that case the transitive reduction of
\((X,\preceq)\) equals the cover relation and lifts to a minimal edge set
on \(G\) (unique up to edges internal to \(\sim_S\)-classes).
\end{theorem}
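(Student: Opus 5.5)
The plan is to first fix the interpretation the statement forces, and then prove the two directions plus the lifting claim separately. Two edges joined by a symmetric pair \((u,v),(v,u)\in E\) cannot both be strictly increasing under a real labeling. So a \emph{time structure} on \(G\) will be read as a map \(t:V\to\mathbb R\) that is constant on \(\sim_S\)-classes and satisfies \(t(u)<t(v)\) whenever \((u,v)\in E\) with \(u\not\sim_S v\). Equivalently, it is a map \(\bar t:X\to\mathbb R\) that is strictly increasing along the quotient edges \(\bar E=\{([u],[v]):(u,v)\in E,\ [u]\neq[v]\}\). \emph{Acyclic} for \((X,\preceq)\) will mean that \((X,\bar E)\) has no directed cycle. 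Since \(\preceq\) is the reflexive--transitive closure of \(\bar E\) (a raw walk projects to a quotient walk), this is equivalent to antisymmetry of \(\preceq\). In particular, directed cycles of length \(\ge 3\) in \(G\) that are not collapsed by \(\sim_S\) are exactly what acyclicity rules out. I would state this reading explicitly, since the Well‑definedness proposition's antisymmetry claim is really this hypothesis.

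\emph{Necessity.} Given \(t\), the descended map \(\bar t\) strictly increases along \(\bar E\). A directed cycle \(x_0\to x_1\to\dots\to x_k=x_0\) in \((X,\bar E)\) would give \(\bar t(x_0)<\bar t(x_0)\), so the quotient is acyclic. It is then a finite poset, hence trivially interval‑finite. By the Theorem on transitive reduction under interval‑finiteness, its transitive reduction exists, is unique, and equals the cover relation \(C\). In the finite setting, uniqueness of the transitive reduction is therefore automatic once acyclicity holds, so the ``unique'' clause is a consequence rather than an extra constraint. I would remark on this.

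\emph{Sufficiency.} If \((X,\bar E)\) is a finite DAG, I would run Kahn's topological sort to enumerate \(X=\{x_1,\dots,x_n\}\) so that every edge goes from lower to higher index. I then set \(\bar t(x_i)=i\) and \(t(u)=\bar t([u])\). This labeling is constant on classes and strictly increasing on every edge between distinct classes, as required.

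\emph{Lifting.} For a cover pair \(x\lessdot y\), choose \(u\in x\), \(v\in y\) with \(uR_Gv\) and a walk \(u=w_0,\dots,w_m=v\). Every class \([w_j]\) met along the walk satisfies \(x\preceq[w_j]\preceq y\), so by the cover property and antisymmetry it equals \(x\) or \(y\). Once the walk enters \(y\) it cannot return to \(x\), since that would give \(y\preceq x\). Hence some edge \((w_{j},w_{j+1})\in E\) goes directly from class \(x\) to class \(y\). Selecting one such edge per cover pair and adding, inside each class, the edges witnessing \(\sim_S\) gives an edge set \(F\subseteq E\) whose reachability closure equals \(R_G\). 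The closure is contained in \(R_G\) because \(F\subseteq E\). For the reverse inclusion, any \(uR_Gv\) factors through a chain of covers in the finite poset together with intra‑class moves. Minimality and uniqueness up to intra‑class choices follow because any edge set with the same closure must realize every cover pair by some cross‑class edge. Such a set need not contain any non‑cover cross edge, since that edge is implied transitively.

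I expect the main obstacle to be the interpretive step rather than any computation. I have to make precise that ``strictly increasing along edges'' is meant modulo \(\sim_S\), and that acyclicity of the quotient must be imposed rather than derived. The paper's Well‑definedness proof asserts antisymmetry without excluding longer cycles, so I would flag this and prove the theorem under the hypothesis as reformulated above. The lifting argument's ``no return to \(x\)'' step is the only other point needing care, and it rests on that same antisymmetry.
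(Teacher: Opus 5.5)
Your proposal is correct and follows essentially the paper's route. Necessity goes by descending the labeling to a strict labeling on the quotient, with uniqueness of the reduction then coming from finiteness and the interval-finite theorem. Sufficiency goes by topologically sorting the quotient and lifting class labels, which the paper itself concedes yields a time structure only ``in the weak sense (strict across class-level edges)''. What you add tightens the paper's argument rather than changing it. You state that weak reading and the acyclicity hypothesis explicitly; the Well-definedness proposition's antisymmetry claim does fail for uncollapsed cycles of length at least three. You also prove that every cover pair is realized by a direct cross-class edge, instead of deferring to ``the finite classical case''.
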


\begin{proof}
(\(\Rightarrow\)) If \(G\) admits a time labeling \(f:V\to\mathbb R\)
with \(u\prec v\Rightarrow f(u)<f(v)\), then \(f\) is constant on each
\(\sim_S\)-class (because symmetric edges force equality) and induces a
strict labeling on \(X\). Hence \(X\) is acyclic and its transitive
reduction is unique (cover relation).

(\(\Leftarrow\)) If \((X,\preceq)\) is acyclic and has a unique
transitive reduction \(C\), choose a topological ordering of \(X\) and
assign strictly increasing real values to classes. Lift to \(V\) by
setting each vertex's label equal to its class label. For any edge
\((u,v)\in E\) with \(u\) in class \(x\) and \(v\) in class \(y\), we
have \(x\preceq y\) and hence the lifted labels satisfy \(f(u)\le f(v)\),
with strict inequality whenever \(x\prec y\). Because symmetric edges
were collapsed, symmetric edges lie inside classes and do not
require strict inequality. Thus the lifted labeling is a valid time
structure for \(G\) in the weak sense (strict across class‑level edges).
Uniqueness and lifting arguments follow as in the finite classical case.
\end{proof}

\begin{remark}
This theorem permits reflexive and symmetric edges while preserving
the meaningful notion of a time structure after collapsing the symmetric
part. It also clarifies that uniqueness of the transitive reduction must
be tested on the quotient poset \(X\).
\end{remark}

\section{Countable case: existence, uniqueness and algorithms}\label{sec:countable_alg}
\subsection{Maximal chains and Zorn argument on the quotient}
\begin{lemma}[Maximal chains on the quotient]
Let \(G=(V,E)\) be countable and let \(X=V/\!\sim_S\) be the quotient.
If \(x\preceq y\) in \(X\) then there exists a maximal finite chain of
classes from \(x\) to \(y\) (maximal with respect to insertion of
intermediate classes).
\end{lemma}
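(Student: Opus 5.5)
Since \(x\preceq y\), I start from a representative walk: there are \(u\in x\), \(v\in y\) and a finite directed walk \(u=w_0,w_1,\dots,w_n=v\) in \(G\). Reading off the classes \([w_0],[w_1],\dots,[w_n]\) gives a finite sequence of classes with \([w_{j-1}]\preceq[w_j]\). After deleting consecutive repetitions, antisymmetry from the Well-definedness proposition makes this a finite chain from \(x\) to \(y\). The plan is then to enlarge this chain by inserting intermediate classes until no more can be inserted, and to show the process stops at a finite chain.

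The enlargement proceeds as follows. Consider the family \(\mathcal C\) of all finite chains \(x=c_0\prec c_1\prec\dots\prec c_m=y\) in \(X\), ordered by inclusion. I do not expect Zorn's lemma to give finiteness directly. The union of an increasing sequence of finite chains is still a chain in \([x,y]\), but it may be infinite, so Zorn only yields a maximal chain. To guarantee a \emph{finite} maximal chain I will use interval-finiteness of \((X,\preceq)\), the standing hypothesis under which the cover relation theorem was stated. Under it, \([x,y]\) is finite, so every chain from \(x\) to \(y\) has length at most \(|[x,y]|\). Any chain of maximal cardinality in \(\mathcal C\) then exists and is maximal with respect to insertion. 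Equivalently, each consecutive pair \(c_{j-1}\prec c_j\) is a cover pair in \(C\).

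For countable \(G\) with \([x,y]\) infinite, I would instead proceed constructively. Enumerate \(X=\{z_0,z_1,\dots\}\), start from the walk chain, and at stage \(k\) insert \(z_k\) whenever it fits strictly between two consecutive elements. This yields a countable maximal chain through the Zorn-style limit. It is finite exactly when the insertion process halts.

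The main obstacle is this finiteness: without interval-finiteness the statement can fail. For example, the quotient could contain a copy of \(\mathbb Q\cap[0,1]\), where no finite chain is maximal. So I would make explicit that the lemma is proved under interval-finiteness of \([x,y]\), or under the weaker condition that chains in \([x,y]\) have bounded length. The remaining steps are routine. The first is checking that inserted classes respect \(\prec\), which follows by transitivity of \(R_G\) lifted through \(\sim_S\), as in the well-definedness proof. The second is that the walk gives a nonempty starting chain.
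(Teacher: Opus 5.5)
Your diagnosis is correct, and it is the paper's proof, not yours, that has the gap. The paper runs Zorn's lemma on finite class chains from \(x\) to \(y\) and claims acyclicity keeps chains finite. The upper-bound step fails: a nested sequence of finite chains can have an infinite union, which has no upper bound among finite chains, and acyclicity does not prevent this. The appendix lemma on maximal walks repeats the same unjustified step. Your example \(V=\mathbb Q\cap[0,1]\), \(E=\{(p,q):p<q\}\) is countable and acyclic with trivial \(\sim_S\). It has no insertion-maximal finite chain from \(0\) to \(1\), because consecutive elements of such a chain must be covers and this order has none. The paper's own ``dense order without covers'' example therefore already refutes the lemma as stated. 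Your route is a correct repair: add interval-finiteness, or the weaker bound on lengths of chains in \([x,y]\) (which also allows infinitely many pairwise incomparable classes between \(x\) and \(y\)), and take a chain of maximum cardinality. It needs no Zorn, and it gives directly the cover decomposition used in part (2) of the generalized characterization.

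Three corrections to your write-up. First, do not cite antisymmetry from the Well-definedness proposition; it is false in general. \(\sim_S\) collapses only cycles of length one and two, as the paper's remark concedes. So a three-cycle \(a\to b\to c\to a\) without symmetric pairs gives \([a]\preceq[b]\preceq[a]\) with \([a]\neq[b]\). In that case deleting consecutive repetitions from a walk's class sequence need not give a chain. Assume acyclicity of \((X,\preceq)\) explicitly; interval-finiteness, being defined for posets, presupposes it anyway. Then a repeated class forces all classes between its occurrences to coincide, and your first step is fine.

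Second, present interval-finiteness as an added hypothesis of the lemma, not a ``standing'' one. Third, drop your third paragraph. The maximal chain built from an enumeration may be infinite even when a different finite maximal chain from \(x\) to \(y\) exists, so that construction settles nothing in the infinite-interval case.
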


\begin{proof}
Apply the subsequence/Zorn argument to the set of finite class chains
from \(x\) to \(y\). Acyclicity of \(X\) (if assumed) prevents repetition
of classes and ensures chains are finite; Zorn's Lemma yields a maximal
chain.
\end{proof}

\subsection{Algorithmic construction}
When \(V\) and \(E\) are recursively enumerable, implement the following
procedure:

\begin{enumerate}
  \item Enumerate vertices and edges; maintain a dynamic partition into
    discovered \(\sim_S\)-classes (merge vertices when symmetric edges or
    self‑loops are observed).
  \item Maintain the quotient graph on classes and compute
    reachability within the discovered quotient.
  \item For each discovered class‑edge \((x,y)\) include it in the
    candidate reduction unless a  intermediate class \(z\)
    satisfies \(xR z\) and \(zR y\).
  \item Continue until stabilization: if the candidate set remains
    unchanged for sufficiently many enumeration stages and no new classes
    appear for a long prefix, output the candidate.
\end{enumerate}

\begin{proposition}[Verifiability and termination under finiteness]
If the quotient \(X\) is finite then the algorithm terminates and returns
the unique transitive reduction. If \(X\) is countable and the true
transitive reduction is finite and recursively enumerable, the algorithm
stabilizes and returns it.
\end{proposition}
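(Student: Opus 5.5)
The argument splits into the two regimes named in the statement. Throughout, ``the algorithm'' means the four-step enumeration procedure, run on the quotient \((X,\preceq)\) of Section~\ref{sec:quotient}. I take \(\preceq\) to be a partial order, which is Proposition (Well-definedness). Uniqueness of the output always comes from Theorem (Transitive reduction under interval-finiteness), and never from the algorithm itself. In the finite case \(X\) is trivially interval-finite, so the transitive reduction exists, is unique, and equals the cover relation \(C\). The job is therefore only to show that the candidate set eventually coincides with \(C\).

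\emph{Finite case.} Because \(X\) is finite, only finitely many classes exist. Each class is witnessed by at least one vertex, and each merge of two discovered classes is witnessed by one symmetric edge or self-loop. I would argue that after some finite stage \(s_0\) every true class has been discovered and every needed merge has occurred, so the partition has stabilized. A second finite stage \(s_1\ge s_0\) then has the following property: for each of the finitely many comparable pairs \(x\prec y\), a finite walk witnessing \(uR_Gv\) has appeared, and so has a witness of \(x\prec z\prec y\) whenever such a \(z\) exists. From \(s_1\) on, step 3 keeps exactly the pairs with no intermediate class, and that set is \(C\).

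Step 4 requires care about what ``terminates'' means. With infinitely many vertices falling into finitely many classes, enumeration does not halt by itself. I would read the statement as saying the candidate set is eventually constant, equal to \(C\), and the stabilization test fires correctly once a bound on \(|X|\), or a finite presentation of \(G\), is available. I would make this reading explicit. When \(V\) and \(E\) are themselves finite, the enumeration exhausts them and the procedure literally halts.

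\emph{Countable case.} Assume the true reduction \(T\) is finite and r.e. I would first argue that \(X\) is interval-finite on the relevant part. Each relation \(x\prec y\) is the transitive closure of edges in \(T\), so it is a finite path in the finite set \(T\). Hence only the finitely many classes incident to \(T\) are comparable to anything, and every interval \([x,y]\) is finite. Then Theorem (Transitive reduction under interval-finiteness) gives \(T=C\). The stage argument from the finite case then runs verbatim on the finitely many classes and pairs involved. Classes that are isolated in the order never enter the candidate set, so they cannot destabilize it. The candidate set therefore converges to \(T\) in the limit.

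\emph{Main obstacle.} The hardest step is excluding \emph{spurious transient candidates}. Suppose that at some stage the pair \((x,y)\) is retained because its intermediate \(z\) has not yet appeared, or because two classes not yet merged are temporarily treated as distinct. I must show each such error is corrected at a finite stage. Such errors happen only finitely often, because the relevant witnesses are finitely many finite objects. They cannot, however, be detected at any computable stage in general. So the honest conclusion is convergence in the limit (\(\Delta^0_2\)-style stabilization), not a certified halt.

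Two points follow. First, one subtlety must be checked: merging classes can only remove candidate pairs or identify them, never create new true covers. This holds by monotonicity of discovered reachability. Second, I would flag that the fixed ``sufficiently many stages'' threshold in step 4 cannot be uniform without extra information. The proposition should therefore be proved with stabilization interpreted as limit convergence.
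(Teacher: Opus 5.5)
\textbf{There is a genuine gap: your fallback from termination to limit convergence fails once $V$ is infinite.} Your route is the paper's own: class-level redundancy decisions become correct once the relevant merges, class-edges and intermediate witnesses have been enumerated. It is sound when $V$ and $E$ are finite, modulo the two small points in the second paragraph. That is also the only case the paper's sketch really covers (``enumeration completes''). You are right that literal termination is unavailable for infinite $V$. The problem is the fallback, which rests on three claims:
\begin{itemize}
\item after some stage ``every needed merge has occurred, so the partition has stabilized'';
\item transient errors ``happen only finitely often, because the relevant witnesses are finitely many finite objects'';
\item isolated classes ``never enter the candidate set''.
\end{itemize}
Witnesses are finitely many \emph{per pair}. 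But every newly enumerated vertex is a fresh singleton class until its $S$-chain appears. Meanwhile its edges count as class-edges, including one half of a symmetric pair inside an isolated class. So errors need not be finite in total.

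Here is a concrete failure. Take vertices $a_0,a_1,\dots,b,c$ with edges $a_0\to a_n$, $a_n\to a_0$ and $a_n\to c$ for $n\ge1$, plus $a_0\to b\to c$. Then $X=\{x,[b],[c]\}$, with $x=\{a_n:n\ge0\}$, is a three-element chain. Its unique, finite, r.e.\ reduction is $\{(x,[b]),([b],[c])\}$. Enumerate $a_0,b,c,(a_0,b),(b,c)$ first. Then, for each $n\ge1$, list $a_n$ and $(a_n,c)$ at one stage and $(a_0,a_n),(a_n,a_0)$ at the next. At each stage of the first kind, $\{a_n\}$ is a discovered class whose only class-edge goes to $[c]$. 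So $(\{a_n\},[c])$, which projects to the non-cover $(x,[c])$, enters the candidate set, and it leaves one stage later. The candidate set therefore oscillates forever and new classes keep appearing, even though $|X|=3$ is known in advance. This refutes both your finite-$X$ reading (eventual constancy) and your countable-case limit convergence. The paper's step ``after a finite stage all \dots\ witnesses for redundancy of other edges are discovered'' fails for the same reason.

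Your monotonicity claim also fails: merges do not only remove candidates. Take edges $a_0\to c$, $a_0\to a_1$, $a_1\to a_0$, $a_1\to c$, with $a_1\to a_0$ listed last. The true cover $(\{a_0\},[c])$ is first eliminated through the transient intermediate $\{a_1\}$, then restored by the merge.

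\textbf{What is missing and how to repair it.} You need a hypothesis that excludes transient classes. For example, assume each vertex is enumerated together with an $S$-chain to a fixed representative of its $\sim_S$-class, or simply that $G$ is finite. Under it your argument goes through:
\begin{itemize}
\item discovered classes are the true classes restricted to discovered vertices;
\item discovered reachability is contained in true reachability;
\item class-edges join only comparable classes, and, as you showed, only finitely many classes are comparable to anything.
\end{itemize}
So consider the stage by which all cover class-edges have appeared, together with intermediate witnesses for the finitely many comparable non-cover pairs. From then on the candidate set is constantly $C$. This is limit convergence, not a certified halt, as you say.

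Two smaller points. First, step 3 inspects only class-edges, so you must check that each cover $x\lessdot y$ is one. A witnessing walk visits only classes in $[x,y]=\{x,y\}$, so the step where it first leaves $x$ is an edge from $x$ to $y$. Second, assume acyclicity of class-level reachability explicitly rather than reading it off Proposition (Well-definedness). Since $\sim_S$ collapses only cycles of length one and two, a directed $3$-cycle of classes makes $\preceq$ non-antisymmetric. Step 3 then deletes every class-edge, because the third class is always an intermediate, and there is no unique reduction to return.
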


\begin{proof}
Standard enumeration and stabilization arguments applied to the quotient
graph; correctness follows because redundancy decisions are class‑level
and determined once all relevant intermediate classes are discovered.
\end{proof}

\section{Acyclicity and unique transitive reduction: generalized characterization}
\label{sec:acyclicity-uniqueTR}

In the finite setting Luckraz \& Pansera (2023)
established the close connection between acyclicity of the informational
digraph and uniqueness of its transitive reduction. In the previous sections, we gave a new result
that collapses reflexive and symmetric anomalies (cycles of length one
and two) into equivalence classes, the natural statement is about the
quotient poset \(X=\mathcal I/\!\sim_S\). The results below give a
single, unified characterization that (i) recovers the finite theorem,
(ii) extends to countable posets under natural finiteness hypotheses,
and (iii) clarifies the measurable/continuum cases by stating the exact
topological hypotheses needed for existence and uniqueness.

Throughout this section let \(G=(\mathcal I,E)\) be the raw informational
digraph, let \(\sim_S\) be the equivalence relation generated by
self‑loops and symmetric pairs (cycles of length one and two), and let
\(X:=\mathcal I/\!\sim_S\) denote the quotient set. Denote the induced
partial order on \(X\) by \(\preceq\) and its strict part by \(\prec\).
Write \(C\) for the cover relation on \(X\) when it exists.

\subsection{Finite and countable posets}

\begin{theorem}[Generalized finite/countable characterization]\label{thm:generalized-finite-countable}
Let \((X,\preceq)\) be a poset obtained by collapsing the symmetric part
\(\sim_S\) of an informational digraph. Then:
\begin{enumerate}[leftmargin=*]
  \item \textbf{(Necessity)} If there exists a transitive reduction \(R\)
    of \((X,\prec)\) and \(R\) is unique, then \((X,\preceq)\) is acyclic.
  \item \textbf{(Sufficiency under interval‑finiteness)} If
    \((X,\preceq)\) is acyclic and interval‑finite (every interval
    \([x,y]\) is finite), then the cover relation \(C\) exists, the
    transitive reduction exists, is unique, and equals \(C\).
  \item \textbf{(Finite case)} If \(X\) is finite then (2) applies and
    the cover relation \(C\) is the unique transitive reduction.
\end{enumerate}
\end{theorem}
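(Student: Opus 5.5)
I will treat the three items separately. Items (2) and (3) reduce to the interval‑finite theorem of Section 2, and item (1) is a short contrapositive argument. Throughout, \((X,\preceq)\) is the reduced quotient poset of Section~\ref{sec:quotient}. By the Well‑definedness proposition, \(\preceq\) is reflexive and antisymmetric, and it is transitive because it is induced by the reachability relation \(R_G\). So ``acyclic'' for \(X\) means there are no \(x_0\prec x_1\prec\cdots\prec x_k=x_0\) with \(k\ge1\). I will first record this as a lemma: any such cycle would give \(x_1\preceq x_0\) and \(x_0\preceq x_1\) by transitivity, hence \(x_0=x_1\) by antisymmetry, contradicting \(x_0\prec x_1\). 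So acyclicity of the quotient in the strict sense always holds once antisymmetry is granted.

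For item (1), I will argue by contraposition. Suppose \(X\) contains a directed cycle of the strict relation \(\prec\) on distinct classes, which is the only failure of antisymmetry that can occur if one works with the raw generated relation before invoking Proposition (Well‑definedness). Take the transitive closure of such a cycle \(x_0\prec\cdots\prec x_{k-1}\prec x_0\) with \(k\ge3\); cycles of length one and two have already been collapsed by \(\sim_S\). Its transitive closure is the complete digraph on \(\{x_0,\dots,x_{k-1}\}\). Any cyclic ordering of these vertices gives a minimal generating edge set, so rotating or reversing the Hamiltonian cycle produces at least two distinct transitive reductions. This contradicts uniqueness. Under the paper's well‑definedness this case is vacuous anyway, and (1) then follows from the lemma above. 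I will present both readings so that the necessity direction is robust.

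For item (2), I invoke the Theorem (Transitive reduction under interval‑finiteness) directly. To make the argument self‑contained, I will also sketch it. Given \(x\prec y\), the interval \([x,y]\) is a finite poset, so a maximal chain from \(x\) to \(y\) exists and is finite; compare the Lemma on maximal chains. Consecutive elements of a maximal chain are covers, because otherwise an intermediate element could be inserted. Hence the transitive closure of \(C\) contains \(\prec\), and conversely \(C\subseteq\prec\) together with transitivity gives equality. For minimality and uniqueness, let \(F\) be any edge set whose transitive closure is \(\prec\), and let \(x\lessdot y\). A path in \(F\) from \(x\) to \(y\) passing through some intermediate \(z\) would give \(x\prec z\prec y\), which is impossible. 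So \((x,y)\in F\), i.e.\ \(C\subseteq F\), and minimality forces \(F=C\). Item (3) follows because a finite poset is trivially interval‑finite.

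The main obstacle is item (1). The phrase ``acyclic'' has to be read correctly: at the level of \(X\) it is nearly automatic, so the real content is showing that a long cycle (length \(\ge3\)) surviving the \(\sim_S\) collapse destroys uniqueness. I expect the delicate step to be checking that a cycle sitting inside a larger ambient poset still yields two distinct global minimal generating sets. I would handle this by fixing the reduction outside the cycle's vertex set and varying only the cyclic order on the cycle.
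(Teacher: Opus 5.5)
Your argument is correct, and it differs from the paper's at both substantive points. For (1), the paper starts from a hypothetical cycle $x_1\prec\cdots\prec x_k\prec x_1$ and argues that minimal generating sets then either fail to exist or are not unique. Its stated justification, that each cycle edge can be bypassed by the remaining cycle edges, does not hold: deleting $(x_i,x_{i+1})$ from a directed cycle leaves only a path from $x_{i+1}$ back to $x_i$. You instead note that a strict cycle contradicts antisymmetry, so under the standing hypothesis that $(X,\preceq)$ is a poset, (1) is automatic. This is rigorous, and it makes explicit that necessity has no real content as stated. For (2), the paper takes a chain of \emph{minimal} length and asserts that its steps are covers. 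That fails, because the shortest chain from $x$ to $y$ is $x\prec y$ itself. Your \emph{maximal} (saturated) chain in the finite interval $[x,y]$ is the correct version of that step. The rest of (2) matches the paper: every generating set $F$ contains $C$, since a longer $F$-path from $x$ to $y$ would produce $x\prec z\prec y$. Item (3) is identical.

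Two cautions. First, take antisymmetry from the theorem's hypothesis that $(X,\preceq)$ is a poset, not from the Well-definedness proposition. That proposition's antisymmetry claim is false in general: a cycle $u\to v\to w\to u$ in $G$ with no symmetric pair leaves $[u],[v],[w]$ distinct yet mutually $\preceq$-related.

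Second, your alternative ``raw relation'' argument for (1) should be dropped or repaired, for three reasons.
\begin{itemize}
\item Rotating a Hamiltonian cycle does not change its edge set.
\item Two-class cycles do survive the collapse: edges $(u,v)$, $(v,v')$, $(v',v)$, $(v',u)$ give $[u]\prec[v]\prec[u]$, and on two vertices there is only one Hamiltonian cycle.
\item Swapping the cyclic order while keeping the rest of $R$ need not give a minimal generating set if the cycle sits inside a larger strongly connected piece.
\end{itemize}
The robust observation is simpler. If $x\prec y\prec x$, then $\prec$ is not transitive, since any transitive closure equal to it would have to contain $(x,x)$. So no edge set has transitive closure equal to $\prec$, no transitive reduction exists, and (1) holds vacuously.
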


\begin{proof}
\textbf{(1) Necessity.} Suppose a unique transitive reduction \(R\) of
\((X,\prec)\) exists. If \((X,\preceq)\) contained a directed cycle
\(x_1\prec x_2\prec\cdots\prec x_k\prec x_1\) with \(k\ge 1\), then no
finite edge set could be minimal in the sense of generating \(\prec\)
because along the cycle edges can be bypassed by traversing the cycle,
and one can construct distinct minimal generating sets by choosing which
cycle edges to keep. Concretely, along a directed cycle every edge is
reachable via the remaining cycle edges, so minimality fails or is not
unique. Therefore uniqueness of \(R\) implies there are no directed
cycles in \((X,\prec)\); i.e., \((X,\preceq)\) is acyclic.

\textbf{(2) Sufficiency under interval‑finiteness.} Assume \((X,\preceq)\)
is acyclic and interval‑finite. Fix \(x\prec y\). The interval
\([x,y]=\{z\in X: x\preceq z\preceq y\}\) is finite by hypothesis. Choose
a chain \(x=z_0\prec z_1\prec\cdots\prec z_m=y\) of minimal length \(m\).
By minimality each consecutive pair \(z_i\prec z_{i+1}\) is a cover;
otherwise we could insert an intermediate element and shorten the chain.
Collecting all such cover pairs yields the cover relation \(C\). For any
\(x\prec y\) the chain of covers from \(x\) to \(y\) shows the transitive
closure of \(C\) equals \(\prec\). Minimality of \(C\) follows because
omitting any cover pair \((z_i,z_{i+1})\) would break reachability
between \(z_i\) and \(z_{i+1}\) (there is no intermediate class to
reestablish it). Hence \(C\) is a transitive reduction.

Uniqueness: suppose \(F\) is any other edge set whose transitive closure
equals \(\prec\). For any cover pair \((z_i,z_{i+1})\in C\), if
\((z_i,z_{i+1})\notin F\) then there must be a path from \(z_i\) to
\(z_{i+1}\) using edges in \(F\), which would pass through some
intermediate \(z\) with \(z_i\prec z\prec z_{i+1}\), contradicting that
\((z_i,z_{i+1})\) is a cover. Thus \(C\subseteq F\). Therefore \(C\) is
contained in every generating set and is the unique minimal generating
set, i.e. the unique transitive reduction.

\textbf{(3) Finite case.} If \(X\) is finite then every interval is finite,
so interval‑finiteness holds and (2) applies. This recovers the classical
finite uniqueness result.
\end{proof}

\begin{corollary}[Equivalence for practical modeling]
For an informational digraph \(G\) with quotient poset \(X\) obtained by
collapsing \(\sim_S\):

\[
\text{\(X\) is acyclic and interval‑finite}
\quad\Longleftrightarrow\quad
\text{the transitive reduction of \(X\) exists and is unique (equals \(C\)).}
\]

Consequently, under these hypotheses \(G\) admits a (weak) time structure
obtained by assigning strictly increasing real labels to classes in \(X\)
and lifting them to \(\mathcal I\).
\end{corollary}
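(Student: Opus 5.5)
The plan is to prove the two implications separately, reusing Theorem~\ref{thm:generalized-finite-countable} as much as possible, and then to derive the time-structure consequence from a linear extension of \(X\).

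\textbf{Left to right.} This is exactly part (2) of Theorem~\ref{thm:generalized-finite-countable}. If \(X\) is acyclic and interval-finite, then the cover relation \(C\) exists, generates \(\prec\) by finite chains of covers, and is contained in every generating set. Hence \(C\) is the unique transitive reduction, and nothing new is needed.

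\textbf{Right to left.} Acyclicity follows from part (1) of Theorem~\ref{thm:generalized-finite-countable}. Next I would show that any transitive reduction \(R\) satisfies \(R\subseteq C\): if \((x,y)\in R\) were not a cover, some \(z\) with \(x\prec z\prec y\) would let us delete \((x,y)\) from \(R\) without changing the closure, contradicting minimality. Since the closure of \(R\) is \(\prec\), every pair \(x\prec y\) is then joined by a finite chain of covers. By the uniqueness argument of part (2), \(R=C\).

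\textbf{Main obstacle.} The step I do not see how to carry out is upgrading "every \(x\prec y\) is joined by a finite chain of covers" to "every interval is finite". I expect this upgrade to fail. Take \(X=\{x,y\}\cup\{z_n:n\in\mathbb N\}\) with \(x\prec z_n\prec y\) for all \(n\) and the \(z_n\) pairwise incomparable. Then \(C=\{(x,z_n),(z_n,y)\}\) is the unique transitive reduction, yet \([x,y]\) is infinite. So I would present the reverse implication with interval-finiteness replaced by the weaker condition \emph{(CF): every strict relation \(x\prec y\) is the composite of finitely many covers}. The argument above proves the equivalence
\[
\text{\(X\) acyclic and (CF)}\iff\text{a unique transitive reduction exists (and equals \(C\))}.
\]
I would then note that interval-finiteness is sufficient for (CF) but not necessary. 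If the literal biconditional is intended, the right-hand side must be read as "under the standing interval-finiteness hypothesis".

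\textbf{The time-structure consequence.} Here I would take a linear extension \(\le^*\) of \((X,\preceq)\), which exists by Szpilrajn's theorem. Then I need a strictly \(\le^*\)-increasing map \(g:X\to\mathbb R\). When \(X\) is countable, I would build \(g\) into \(\mathbb Q\) by the standard back-and-forth (or forth-only) enumeration. I would then set \(f(I):=g([I])\) for \(I\in\mathcal I\). As in the finite characterization, \(f\) is constant on \(\sim_S\)-classes and strictly increasing across class-level edges, so it is a weak time structure. For uncountable \(X\), a real-valued order embedding need not exist (for example, a chain of type \(\omega_1\)). So I would state the consequence for countable \(X\), or else add a separability hypothesis on the linear extension, deferring the general case to the topological conditions of the later sections.
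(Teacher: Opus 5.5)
Your objection to the statement is correct, and your argument is more careful than the paper's. The paper gives no separate proof of this corollary; both directions are meant to follow from Theorem~\ref{thm:generalized-finite-countable}.

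\textbf{The biconditional.} Your forward direction is exactly part (2). For the reverse direction, part (1) gives only acyclicity, and nothing in the paper yields interval-finiteness. The remark in the subsection on necessity of finiteness hypotheses equates ``every ordered pair is generated by finitely many intermediate classes'' with interval-finiteness. That is precisely the conflation you isolated.

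Your counterexample is valid. Every generating set must contain the covers $(x,z_n)$ and $(z_n,y)$, and these generate $x\prec y$. So $C$ is the unique transitive reduction, while $[x,y]$ is infinite.

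You should add that it is realizable in the paper's framework. Take a root information set $x$ with countably many actions $a_n$. Let each history $(a_n)$ be a singleton information set $z_n$ with one action $b$, and let all histories $(a_n,b)$ form a single information set $y$. There are no self-loops or two-cycles, so $\sim_S$ is trivial and $X$ is your poset. Hence the literal biconditional already fails for countable games, and your (CF) formulation is the right repair.

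Two small additions to your argument:
\begin{itemize}
\item In the step $R\subseteq C$, say why the bypass works. The $R$-paths from $x$ to $z$ and from $z$ to $y$ cannot use the edge $(x,y)$, since that would give $y\preceq z$ or $z\preceq x$, contradicting antisymmetry.
\item If you cite part (2) for sufficiency, note that its proof takes a chain of \emph{minimal} length in $[x,y]$. Consecutive elements are covers only for a chain of maximal length, which exists because $[x,y]$ is finite.
\end{itemize}

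\textbf{The time-structure consequence.} Your countable argument is correct: a linear extension of a countable poset embeds in $\mathbb{Q}$. It is also genuinely needed. The appendix's topological-sort proof labels vertices by removal stage, an $\omega$-type sequence. It therefore cannot handle a chain of order type $\mathbb{Z}$, which is acyclic and interval-finite.

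However, your reason for excluding uncountable $X$ does not apply under the corollary's hypotheses. A chain of type $\omega_1$ is not interval-finite, since the interval from $0$ to $\omega$ is infinite. What the $\omega_1$ phenomenon does show is that your route through an \emph{arbitrary} linear extension can fail. For example, an uncountable antichain has a linear extension of type $\omega_1$, although the constant labeling works.

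So for uncountable interval-finite $X$ your proposal leaves the consequence unproved, not refuted. Since the corollary sits in the finite/countable subsection, restricting to countable $X$ is a reasonable scope. Present it as a limitation of your method, not as a counterexample to the claim.
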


\subsection{Necessity of finiteness hypotheses}
The interval‑finiteness hypothesis in Theorem \ref{thm:generalized-finite-countable}
is essential for existence of a transitive reduction in the infinite
case. There are acyclic posets (for example, dense orders like
\(\mathbb Q\cap[0,1]\)) that admit order embeddings into \(\mathbb R\)
(i.e., time labelings) but have no covers and hence no transitive
reduction in the sense of a minimal generating edge set. Thus:
\begin{itemize}
  \item \emph{Acyclicity alone} guarantees the existence of a time
    labeling (topological ordering) in the countable case (via topological
    sorting) but does not guarantee the existence of a minimal finite
    generating edge set unless intervals are finite.
  \item \emph{Uniqueness of transitive reduction} (as a combinatorial
    object) is equivalent to acyclicity plus the structural condition
    that every ordered pair is generated by finitely many intermediate
    classes (interval‑finiteness).
\end{itemize}

\subsection{Interpretation: redundant information sets}
\begin{definition}[Redundant information]
An information set \(I\) is \emph{redundant} if removing some outgoing
edge from \(I\) does not change the reachability relation \(R_G\). The
graph \(G\) \emph{contains redundant information} if its transitive
reduction is not unique.
\end{definition}

\begin{corollary}
In the finite case \(G\) contains no redundant information iff the
transitive reduction is unique iff the cover relation on the quotient
poset \(X\) equals the transitive reduction.
\end{corollary}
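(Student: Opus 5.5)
The corollary asserts a chain of three equivalences in the finite case: (a) \(G\) contains no redundant information; (b) the transitive reduction is unique; (c) the cover relation \(C\) on the quotient poset \(X\) equals the transitive reduction. The plan is to prove (a)\(\Leftrightarrow\)(b) directly from the Definition of redundant information, and (b)\(\Leftrightarrow\)(c) from Theorem~\ref{thm:generalized-finite-countable}.

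\textbf{Step 1: (a)\(\Leftrightarrow\)(b).} By the Definition, ``\(G\) contains redundant information'' means exactly ``the transitive reduction is not unique.'' Negating both sides gives (a)\(\Leftrightarrow\)(b) at once. We read ``the transitive reduction'' throughout as that of the quotient \((X,\preceq)\), following the remark after the finite characterization that uniqueness must be tested on \(X\).

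\textbf{Step 2: (b)\(\Rightarrow\)(c).} Assume the transitive reduction \(R\) of \((X,\prec)\) exists and is unique. Part (1) of Theorem~\ref{thm:generalized-finite-countable} gives that \((X,\preceq)\) is acyclic. Since \(X\) is finite, part (3) applies: \(C\) exists and is the unique transitive reduction. Hence \(R=C\).

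\textbf{Step 3: (c)\(\Rightarrow\)(b).} Assume \(C\) equals the transitive reduction. Then \(C\) generates \(\prec\) under transitive closure. If \((X,\preceq)\) is acyclic, part (2)/(3) of the theorem says every generating set contains \(C\). So any minimal generating set equals \(C\), and uniqueness follows.

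This step is the main obstacle: we need acyclicity of \((X,\preceq)\) before invoking part (2).
\begin{itemize}
\item I would first get it from the Proposition on well-definedness, which asserts that \(\preceq\) is antisymmetric, so \(X\) is a poset.
\item As a fallback, argue directly. Suppose a cycle of length \(k\ge 2\) survives in \(X\). Any two distinct classes \(x\), \(y\) on it satisfy \(x\prec y\prec x\). Then no element of the cycle is covered by its successor in the sense of the Cover relation Definition, because the rest of the cycle furnishes intermediate elements. So \(C\) cannot generate those pairs, contradicting that \(C\) is a transitive reduction.
\end{itemize}

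Combining Steps 1 through 3 closes the chain of equivalences.
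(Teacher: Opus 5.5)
Steps 1 and 2 are how the corollary is meant to follow; the paper gives no separate proof and reads it off the Definition of redundant information together with Theorem~\ref{thm:generalized-finite-countable}. Step 3, however, has a real gap, because neither of your two routes to acyclicity works.

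The antisymmetry half of the well-definedness Proposition is false in general, since $\sim_S$ merges only along self-loops and two-cycles. A directed three-cycle $u\to v\to w\to u$ with no symmetric pair leaves three distinct, mutually $\preceq$-related classes. The Remark in Section~\ref{sec:quotient} says such cycles are deliberately left uncollapsed, and the last counterexample figure displays $0.3<1.1<0.2<0.3$. If that half were true, every finite $X$ would be acyclic, (b) and (c) would hold unconditionally, and your appeal to part (1) in Step 2 would be idle. So Steps 2 and 3 also disagree about whether acyclicity is automatic.

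Your fallback fails at $k=2$. Take edges $u\to w$, $w\to u$, $u\to v$, $v\to w$. Then the classes $x=\{u,w\}$ and $y=\{v\}$ satisfy $x\prec y\prec x$. There is no ``rest of the cycle'', and both $(x,y)$ and $(y,x)$ are covers, so $C$ does connect them. Under the literal definition $C$ still fails to be a transitive reduction here, but only because $\prec$ is not transitive: $x\prec y\prec x$ while $x\not\prec x$.

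The detour is unnecessary, because (c)$\Rightarrow$(b) needs no acyclicity. Let $F$ be any edge set with transitive closure ${\prec}$, suppose $(x,y)\in C\setminus F$, and take an $F$-path $x=w_0\to w_1\to\cdots\to w_m=y$. Since $(x,y)\notin F$ and $F\subseteq{\prec}$ is irreflexive, $w_1\notin\{x,y\}$. Then $x\prec w_1$, because $(x,w_1)$ is an $F$-edge, and $w_1\prec y$, via the rest of the path. This contradicts that $y$ covers $x$. Hence $C\subseteq F$ for every generating set $F$. This is just the uniqueness paragraph in the proof of part (2), which uses neither acyclicity nor interval-finiteness. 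So if $C$ generates ${\prec}$, every minimal generating set contains the generating set $C$ and must equal it.

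Alternatively, under the paper's literal definition, the existence of any transitive reduction forces ${\prec}$ to equal a transitive closure, hence to be transitive. A transitive irreflexive relation has no cycles. This secures acyclicity in both Steps 2 and 3 without the Proposition or the cycle-edge-choice argument offered for part (1).
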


\begin{remark}
This restates and clarifies the finite characterization: the quotient
poset \(X\) is the canonical object to be ordered by any clock; uniqueness
of the transitive reduction on \(X\) is equivalent to absence of
redundant information.
\end{remark}

\section{Continuum rooted real trees}\label{sec:continuum_tree}
We now move to continuum trees built from continuous height functions and
show how to place Kuhn games on them.

\subsection{Height function and pseudo‑metric}
Let \(g:[0,1]\to\mathbb R_{\ge0}\) be continuous with \(g(0)=g(1)=0\).
Define

\[
m_g(s,t):=\min_{u\in[s\wedge t,s\vee t]} g(u),\qquad
d_g(s,t):=g(s)+g(t)-2m_g(s,t).
\]

\subsection{Quotient tree \(T_g\)}
Define \(s\sim t\) iff \(d_g(s,t)=0\). The quotient
\(T_g:=[0,1]/\!\sim\) equipped with the metric induced by \(d_g\) is a
compact rooted real tree. Denote the equivalence class of \(s\) by \([s]\).
The root \(r\) is the class of a global minimum of \(g\) (take \(r=[0]\)).

\begin{remark}
\(T_g\) is a compact metric real tree: any two points are joined by a
unique arc isometric to a compact interval. The metric \(d_g\) is
continuous and \(T_g\) is Polish.
\end{remark}

 [Prefix order on \(T_g\)]
Define a strict partial order \(\prec\) on \(\mathcal H:=T_g\) by:

\[
[s]\prec[t]\quad\Longleftrightarrow\quad [s]\neq[t]\ \text{and the geodesic from }r\text{ to }[t]\text{ passes through }[s].
\]

A geodesic between two points \(x,y\in T_g\) is the unique shortest path joining them; formally it is an isometry

\[
\gamma:[0,d_g(x,y)]\to T_g,\qquad \gamma(0)=x,\ \gamma(d_g(x,y))=y.
\]

In \(T_g\) every pair of points is joined by a unique geodesic.

\begin{example} [Linear / constant function (degenerate tree)]
Function: The function is \(g(t)=0\) for every \(t\in[0,1]\).

What the quotient does: For every \(s,t\) we have \(m_g(s,t)=0\) and \(d_g(s,t)=0\). Thus all points of \([0,1]\) are identified and \(T_g=\{[0]\}\), a single point (the root).
Geodesics:There is only one point, so geodesics are trivial. This is the degenerate tree with no edges and no branching.
\end{example}

\begin{example}[Interior maximum (single peak, no branching]
Piecewise linear single peak (written without a cases environment):
Define \(g\) by

\[
g(0)=0,\qquad g(1)=0,
\]

and for \(t\in[0,1]\) set

\[
\text{for }0\le t\le \tfrac{1}{2}\text{, } g(t)=2t;\qquad
\text{for }\tfrac{1}{2}\le t\le 1\text{, } g(t)=2(1-t).
\]

Then \(g(\tfrac{1}{2})=1\) is the unique interior maximum.

How the quotient looks:
Because \(g\) has no interior local minima (only a single global maximum), the quotient \(T_g\) is isometric to a single compact interval. Concretely \(T_g\) is a line segment: distances between the two extreme leaves equal the total up‑and‑down height encoded by \(g\).

Geodesics made explicit:
If \(x=[s]\) and \(y=[t]\) with \(s<t\), the geodesic \([x,y]\) is the image in \(T_g\) of the interval \([s,t]\) (after collapsing identified points). It is isometric to \([0,d_g(x,y)]\). There are no branchpoints: every interior point of the tree has degree \(2\).
\end{example}
\begin{example} [Interior local minimum between two peaks (cutpoint but not a branchpoint)]
Piecewise linear two‑peak with interior dip (written without a cases environment):
Define \(g\) by specifying values on subintervals:

\[
\text{for }0\le t\le \tfrac{1}{4}\text{, } g(t)=4t;
\qquad
\text{for }\tfrac{1}{4}\le t\le \tfrac{3}{4}\text{, } g(t)=1-2\bigl|t-\tfrac{1}{2}\bigr|;
\]

\[
\text{for }\tfrac{3}{4}\le t\le 1\text{, } g(t)=4(1-t).
\]

Graphically: rise from \(0\) to a first peak, drop to a local minimum near \(t=\tfrac{1}{2}\), then rise again and finally fall to \(0\) at \(1\).

What happens in \(T_g\):
The local minimum at \(t_0\approx\tfrac{1}{2}\) becomes a cutpoint: removing the class \([t_0]\) separates the tree into two connected components (left and right). This point is not a branchpoint of degree \(\ge 3\); it has degree \(2\) if exactly two excursions meet there. The tree looks like two arcs joined at a single interior point.

Geodesics:
A geodesic between a point on the left arm and a point on the right arm passes through \([t_0]\). The geodesic is the concatenation of the left subarc and the right subarc meeting at \([t_0]\). Uniqueness of the geodesic still holds.
\end{example}
\begin{example} [True branching from multiple excursions (branchpoint of degree 3)]
Three disjoint excursions meeting at the same zero level"
Partition \([0,1]\) into three subintervals \(I_1=[0,\tfrac{1}{3}],\ I_2=[\tfrac{1}{3},\tfrac{2}{3}],\ I_3=[\tfrac{2}{3},1]\). On each \(I_k\) define a triangular excursion of height \(1\) that starts and ends at \(0\). For example, on \(I_1\) set

\[
g(t)=6\min\!\bigl(t,\tfrac{1}{3}-t\bigr),\qquad t\in\bigl[0,\tfrac{1}{3}\bigr].
\]

and define \(g\) similarly on \(I_2\) and \(I_3\).

What the quotient looks like:
All zeros at the boundaries of the three excursions are identified in the quotient to a single class \([0]\) (the global minimum). The class \([0]\) is a branchpoint of degree \(3\): removing \([0]\) yields three connected components (the three excursion branches). The tree \(T_g\) is a star with three arms (each arm is an interval of length equal to the excursion height).

Geodesics:
A geodesic between two points on the same arm is the subinterval of that arm. A geodesic between points on different arms goes from the first point to the root \([0]\) then out to the second point; it is the concatenation of two arm segments and is unique.
\end{example}

\section{Extensive games on continuum trees}\label{sec:kuhn_continuum}
We place an extensive form on the continuum tree.

\subsection{Decision vertices and actions}
Let \(\mathcal H^D\subseteq\mathcal H\) be the set of decision vertices,
assumed Borel. Let \(N=\{1,\dots,n\}\) be the finite player set and
\(P:\mathcal H^D\to N\) a Borel player assignment.

\paragraph{Global histories.}
Let \(\mathcal H\) be the set of (finite or infinite) histories that start at the root and carry the usual prefix order \(\preceq\).

\paragraph{Extensions of a node.}
For \(h\in\mathcal H\) write

\[
\operatorname{Ext}(h)\;=\;\{\,v\in\mathcal H:\;h\preceq v\,\},
\]

the set of global histories that extend \(h\).

\paragraph{Histories rooted at \(h\) (continuations).}
For each decision node \(h\in\mathcal H^D\) define the set of continuations

\[
\mathcal H_h \;=\; \{\,a:\; a\text{ is a finite or infinite sequence of moves starting at }h\,\}.
\]

There is a canonical bijection \(\phi_h:\operatorname{Ext}(h)\to\mathcal H_h\) given by taking the suffix of a global history after \(h\). Under this bijection the empty continuation corresponds to the global history \(h\) itself.

\paragraph{Action sets.}
For each decision node \(h\) take the action set \(\mathcal A_h\subseteq\mathcal H_h\). Thus an action \(a\in\mathcal A_h\) is a continuation (a tail) starting at \(h\), not a global history from the root.

\paragraph{Measurability requirement}
In either formulation require that each \(\mathcal A_h\) is a standard Borel space and that the map or correspondence \((h,a)\mapsto\tau(h,a)\) has a Borel graph, so measurable selection and probability constructions apply.

\subsection{Information partition}
For each player \(i\) let \(\mathcal I_i\) be a partition of \(P_i\) into
disjoint nonempty Borel cells. Elements \(I\in\mathcal I_i\) are
information sets. Assume action consistency and no self‑ancestry.

\subsection{Informational digraph on continuum trees}
Define \(G=(\mathcal I,E)\) with

\[
(I',I'')\in E \quad\Longleftrightarrow\quad \exists\,h'\in I',\ \exists\,h''\in I'':\ h'\prec h''.
\]

The quotient poset \(X=\mathcal I/\!\sim\) is defined as before.

\section{Time structure in the continuum}\label{sec:continuum_time}
Continuum representations require explicit topological hypotheses. We
state a Debreu‑style theorem and measurable variants.

\subsection{Order‑dense separability}
\begin{definition}[Order‑dense countable subset]
A countable set \(D\subset V\) is \emph{order‑dense} for \(\prec\) if for
every \(x\prec y\) there exists \(d\in D\) with \(x\prec d\prec y\).
\end{definition}

\subsection{Debreu‑style continuous representation}
\begin{theorem}[Continuous time labeling for acyclic continuum trees]\label{thm:debure_continuum}
Let \(V\) be a second‑countable metric space and \(\prec\subseteq V\times V\)
a strict partial order satisfying:
\begin{enumerate}[leftmargin=*]
  \item \(\prec\) is acyclic,
  \item \(\prec\) is topologically closed in \(V\times V\),
  \item there exists a countable order‑dense subset \(D\subset V\).
\end{enumerate}
Then there exists a continuous function \(f:V\to\mathbb R\) such that
\(x\prec y\Rightarrow f(x)<f(y)\).
\end{theorem}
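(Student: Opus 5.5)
The plan is to build \(f\) as a countable weighted sum of continuous, weakly increasing separating functions, in the spirit of Debreu's and Levin's constructions. Write \(\preceq\) for the reflexive closure of \(\prec\); by hypothesis (2) together with closedness of the diagonal in a metric space, \(\preceq\) is closed in \(V\times V\). For each \(v\in V\) the sets
\[
{\downarrow}v=\{z: z\preceq v\},\qquad {\uparrow}v=\{z: v\preceq z\}
\]
are then closed, being sections of a closed relation.

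\textbf{Step 1 (reduce to pairs from \(D\)).} Let \(x\prec y\). Apply order‑density (hypothesis (3)) twice to obtain \(d,e\in D\) with \(x\prec d\prec e\prec y\). The closed sets \({\downarrow}d\) and \({\uparrow}e\) are disjoint: a common point \(z\) would satisfy \(e\preceq z\preceq d\), which together with \(d\prec e\) produces a cycle, contradicting acyclicity (1). Moreover \(x\in{\downarrow}d\) and \(y\in{\uparrow}e\). Since \(D\) is countable, the set \(P=\{(d,e)\in D^2: d\prec e\}\) can be enumerated as \((d_n,e_n)_{n\ge1}\).

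\textbf{Step 2 (separating functions).} For each \(n\) I aim to construct a continuous \(g_n:V\to[0,1]\) that is weakly increasing (\(u\preceq v\Rightarrow g_n(u)\le g_n(v)\)), with \(g_n\equiv0\) on \({\downarrow}d_n\) and \(g_n\equiv1\) on \({\uparrow}e_n\). Given these, set
\[
f=\sum_{n} 2^{-n}g_n .
\]
The series converges uniformly, so \(f\) is continuous. If \(x\prec y\), every term satisfies \(g_n(x)\le g_n(y)\), and the pair \((d,e)\) from Step 1 gives one index with \(g_n(x)=0<1=g_n(y)\). Hence \(f(x)<f(y)\), as required.

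\textbf{Step 3 (the obstacle: constructing \(g_n\)).} This is a Nachbin‑type Urysohn lemma for ordered spaces. It needs \(V\) to be \emph{normally ordered}: disjoint closed decreasing and closed increasing sets must have disjoint decreasing/increasing open neighbourhoods. This does not follow automatically from metrizability plus a closed order.

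My first attempt will use order‑density to build the Urysohn scale inside \(D\). Repeatedly interpolating gives a chain \((q_r)\), indexed by dyadic rationals \(r\in[0,1]\), with \(q_0=d_n\), \(q_1=e_n\), and \(q_r\prec q_s\) whenever \(r<s\). Then define
\[
g_n(z)=\sup\{r: q_r\preceq z\}.
\]
This \(g_n\) is weakly increasing, vanishes off \({\uparrow}q_r\) for all \(r>0\) (in particular on \({\downarrow}d_n\), by the disjointness argument of Step 1), and equals \(1\) on \({\uparrow}e_n\). Because each \(\{g_n\ge r\}=\bigcap_{s<r}{\uparrow}q_s\) is closed, \(g_n\) is upper semicontinuous.

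Lower semicontinuity is the genuine gap: it requires the open sets \(V\setminus{\downarrow}q_s\) to approximate \({\uparrow}q_r\) from outside. To close it, I would first try to exploit second countability, choosing the interpolants \(q_r\) so that \({\uparrow}q_r\subseteq\operatorname{int}{\uparrow}q_s\) for \(s<r\). If that fails, the fallback is to add local compactness (or \(\sigma\)-compactness, as assumed elsewhere in the paper), under which Levin's theorem gives normal orderedness of closed preorders and hence the \(g_n\) directly. Without it, the same sum still yields a strictly increasing upper‑semicontinuous, hence Borel, labeling.
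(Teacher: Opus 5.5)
Your Steps 1 and 2 are correct, and you located the crux exactly: everything reduces to continuous, weakly increasing \(g_n\) with \(g_n=0\) on \({\downarrow}d_n\) and \(g_n=1\) on \({\uparrow}e_n\). But Step 3 is not merely unfinished. It cannot be finished, because the statement is false under its hypotheses.

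\emph{Construction.} In \(\ell^2\), using orthonormal vectors with distinct labels, put \(e=0\), \(b_{k,m}=\tfrac{1}{k}u_{k,m}\), \(a_k=10v_k\), \(c_{k,m}=a_k+\tfrac{1}{m}w_{k,m}\) (\(k,m\ge 1\)). Place \(d\), and every point of countably many copies \(P_k\), \(R\), \(Q_{k,m}\) of \(\mathbb{Q}\cap(0,1)\), at \(10\) times fresh basis vectors. Let \(\prec\) be the union of the linear orders \(a_k<P_k<d<R<e\) (one for each \(k\), all sharing the segment \(d<R<e\)) together with, for each \((k,m)\), a separate linear order \(b_{k,m}<Q_{k,m}<c_{k,m}\).

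\emph{Hypotheses hold.} This is a strict partial order. \(V\) is countable and every related pair has a point strictly between, so \(D=V\) is order-dense. \(V\) is closed in \(\ell^2\), and its only non-isolated points are \(e\) (limit of \(b_{k,m}\) as \(k\to\infty\), uniformly in \(m\)) and \(a_k\) (limit of \(c_{k,m}\) as \(m\to\infty\)). The graph of \(\prec\) is closed, for three reasons. First, no \(c_{k,m}\) lies below anything and no \(b_{k,m}\) lies above anything, so \(a_k\) can be approached only through second coordinates and \(e\) only through first coordinates. Second, pairs from the first component, or from finitely many of the others, lie in a uniformly discrete set. Third, pairs drawn from infinitely many components \((k,m)\) can have convergent second coordinates only if these are \(c_{k,m_j}\to a_k\) with \(k\) fixed; then the first coordinates (\(b_{k,m_j}\), pairwise \(\sqrt{2}/k\) apart, or points of \(Q_{k,m_j}\)) do not converge.

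\emph{Conclusion fails.} Suppose \(f\) is continuous and strictly increasing, and let \(\delta=f(e)-f(d)>0\). Continuity at \(e\) gives one \(k\) with \(f(b_{k,m})>f(e)-\delta/2\) for every \(m\). Hence \(f(c_{k,m})>f(e)-\delta/2\) for every \(m\), and letting \(m\to\infty\) gives \(f(a_k)\ge f(d)+\delta/2\), contradicting \(a_k\prec d\). The same computation shows that no continuous increasing \(g\) separates \({\downarrow}d\) from \({\uparrow}e\). Continuity at \(e\) constrains values on a whole non-compact ball, which no sequential closedness of \(\prec\) can control. In particular your first repair, choosing interpolants with \({\uparrow}q_r\subseteq\operatorname{int}{\uparrow}q_s\), is unavailable: \(e\) lies in every \({\uparrow}r\) with \(r\in R\), but in the interior of none.

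The paper's own proof does not get past this point either. It asserts that \(L_i=\{x:x\prec d_i\}\) and \(R_i=\{x:d_i\prec x\}\) are open because the graph of \(\prec\) is closed. That is backwards: a closed graph gives closed sections. In the example, \(\{x:x\prec d\}\) contains \(a_k\) but none of the \(c_{k,m}\to a_k\). The paper then obtains continuity from unspecified ``standard refinements''.

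What your argument does prove under the stated hypotheses is your last fallback. The sum \(\sum_n 2^{-n}g_n\), or simply \(\sum_n 2^{-n}\mathbf{1}_{{\uparrow}p_n}\) for an enumeration \(D=\{p_1,p_2,\dots\}\), is a strictly increasing upper semicontinuous, hence Borel, labeling. A continuous labeling needs an extra hypothesis restoring normal orderedness, such as local compactness together with \(\sigma\)-compactness, as in the Nachbin--Levin results you mention. \(\sigma\)-compactness by itself does not suffice: the counterexample is countable (indeed Polish) and fails precisely at its one non-locally-compact point \(e\).
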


\begin{proof}[Proof Sketch - see the appendix for the full proof]
Adapt Debreu's separability construction. Use \(D=\{d_1,d_2,\dots\}\) to
define open separating neighborhoods \(L_i=\{x:x\prec d_i\}\) and
\(R_i=\{x:d_i\prec x\}\). Closedness of \(\prec\) ensures these sets are
open. Recursively construct a nested family of open sets \(U_q\) indexed
by rationals so that for every \(x\prec y\) some \(U_q\) separates them.
Define \(f(x)=\inf\{q:x\in U_q\}\). Standard arguments using second
countability and order‑density yield continuity and strict monotonicity.
\end{proof}

\subsection{Borel measurable labeling under weaker hypotheses}
\begin{theorem}[Borel measurable time labeling]\label{thm:polish}
Let \(G=(V,E)\) be a topological informational digraph with \(V\) Polish
and \(E\) Borel. Suppose the quotient of \(V\) by strongly connected
components (SCC)	 is acyclic and the quotient is standard Borel. Then there
exists a Borel measurable \(f:V\to\mathbb R\) with \((u,v)\in E\Rightarrow f(u)<f(v)\).
\end{theorem}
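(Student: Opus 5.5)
The plan is to build $f$ as a countable weighted sum of indicator functions of Borel \emph{up-sets}, following the Debreu-style idea of Theorem~\ref{thm:debure_continuum}. Order-density and continuity are replaced by a countable separating family of Borel sets, which the standard Borel quotient supplies.

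\textbf{Step 1: pin down the hypothesis.} Strict inequality along every edge forces $G$ itself to have no directed cycles, including self-loops. So I read ``the SCC quotient is acyclic'' as saying every strongly connected component is a single vertex with no loop. If a nontrivial component existed, summing $f(v)-f(u)>0$ around a cycle gives a contradiction, so no such $f$ could exist. Under this reading, the reachability relation $R_G$ (transitive closure of $E$) is a strict partial order $<$ on $V$, and $V$ is Borel-isomorphic to the quotient. I will also check that $<$ is analytic. Indeed $R_G=\bigcup_{k\ge1}\pi(E^{(k)})$, where $E^{(k)}\subseteq V^{k+1}$ is the Borel set of walks of length $k$ and $\pi$ projects onto the endpoints. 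A countable union of projections of Borel sets is analytic.

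\textbf{Step 2: countable separating up-sets.} Since the quotient is standard Borel, fix a countable family $\{B_n\}$ of Borel sets separating points. For each $n$ form $\mathrm{Up}(B_n)=\{v:\exists b\in B_n,\ b\le v\}$. This set is analytic, and it is upward closed along $E$. I then want Borel sets $U_n$ with three properties:
\begin{enumerate}[leftmargin=*]
  \item each $U_n$ is upward closed under $E$;
  \item $B_n\subseteq U_n$;
  \item $U_n$ avoids a prescribed analytic set of points not above $B_n$.
\end{enumerate}
To get these, I apply the Lusin separation theorem together with a first-reflection (Harrington--Kechris style) argument. Alternate between separating analytic sets and closing upward, countably many times, so the limit is both Borel and an up-set.

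Given such sets, define
\[
f(x)=\sum_{n\ge1}2^{-n}\,\mathbf 1_{U_n}(x).
\]
This $f$ is Borel and weakly increasing along $E$. For strictness on an edge $(u,v)$, I will choose the family so that for each such pair some $U_n$ contains $v$ but not $u$. This is where acyclicity is used: since $v\not\le u$, the set $\mathrm{Up}(\{v\})$ misses $u$. Refining the $B_n$ to ``small'' separating sets near $v$ and intersecting with the complement of $\mathrm{Down}(u)$ then yields a witness from the countable family.

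\textbf{Main obstacle.} The hard step is Step 2: reconciling countability with uniform strictness. One countable family must witness $v\in U_n\not\ni u$ for \emph{every} edge at once, and each $U_n$ must remain Borel after upward closure. Analytic up-sets need not be Borel, and for general Borel partial orders a Borel order-preserving real map can fail to exist; the Harrington--Marker--Shelah obstructions involve perfect antichains. I would first try to extract the needed separation from the standard Borel hypothesis via the reflection argument above.

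If that fails, I would retreat in one of two ways. The first is to strengthen the hypothesis so that reachability classes are closed, as in Theorem~\ref{thm:debure_continuum}. Then the $\mathrm{Up}(B_n)$ for open $B_n$ are Borel directly, and the sum construction goes through verbatim. The second is to settle for a universally measurable $f$, obtained from the analytic up-sets via the Jankov--von Neumann uniformization, and to note exactly where Borelness is lost.
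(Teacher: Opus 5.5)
Your Step~1 is correct. The SCC condensation of any digraph is acyclic, so the hypothesis only has content if every SCC is a loopless singleton; otherwise the conclusion fails trivially.

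\textbf{The gap is Step~2, and it cannot be closed.} If the $U_n$ are up-sets along $E$, then $f=\sum_n 2^{-n}\mathbf 1_{U_n}$ is strictly increasing along $E$ exactly when every edge $(u,v)$ has some $n$ with $v\in U_n\not\ni u$. Conversely, any $f$ as in the theorem yields such a countable family of Borel up-sets, namely $\{z: f(z)>q\}$ for $q\in\mathbb Q$. So Step~2 restates the theorem rather than reducing it. The one concrete mechanism you offer is to shrink $B_n$ around $v$ and intersect with the complement of $\mathrm{Down}(u)$. That produces a set depending on the pair $(u,v)$, so countability is lost. The standard Borel hypothesis only gives sets separating \emph{points}, which says nothing about separating $u$ from $v$ \emph{in the order}. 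No Lusin separation or reflection argument can fix this, because the statement is false as written, even under your reading.

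\textbf{Counterexample.} Take $V=2^{\omega}\times\{0,1\}$ and put $((x,i),(y,j))\in E$ iff $x<_{\mathrm{lex}}y$, or $x=y$, $i=0$, $j=1$.
\begin{itemize}
\item $E$ is Borel (an open set union a closed set), transitive and irreflexive. So every SCC is a loopless singleton, and the quotient is $V$ itself, a compact Polish space.
\item It is even an informational digraph: let chance draw $(a,b)\in E$, after which player~1 has one trivial move at a history in $I_a$ and then one at a history in $I_b$.
\item If $f$ were strictly increasing along $E$, the intervals $(f(x,0),f(x,1))$, $x\in 2^{\omega}$, would be nonempty. They would also be pairwise disjoint, since $x<_{\mathrm{lex}}y$ gives $f(x,1)<f(y,0)$. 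That is uncountably many disjoint open intervals in $\mathbb R$, which is impossible.
\end{itemize}
So no $f$ exists at all, whether Borel, universally measurable or otherwise. This also rules out your second fallback. Note that the obstruction is a Borel \emph{linear} order with uncountably many jumps, not a perfect antichain.

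\textbf{The paper's sketch fails too.} Iteratively removing Borel-selected minimal elements and labelling them by increasing rationals can only exhaust countably many points. Moreover, minimal elements need not exist, already for $(\mathbb R,<)$.

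\textbf{What your first fallback needs.} It is the right repair once the hypotheses are stated precisely.
\begin{itemize}
\item Assume the reflexive reachability relation $\le$ is closed in $V\times V$. Then each $\mathrm{Down}(u)$ is closed, so for $u<v$ some basic open $B_n\ni v$ misses $\mathrm{Down}(u)$, giving $v\in\mathrm{Up}(B_n)\not\ni u$.
\item Assume $V$ is $\sigma$-compact. Then $\mathrm{Up}(B_n)$ is a countable union of projections of closed sets along compact factors, hence $F_\sigma$. Closed upper sections alone only make it an uncountable union of closed sets.
\end{itemize}
With a countable base $\{B_n\}$, your sum then works verbatim. The compact example above fails only the closedness of $\le$.
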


\begin{proof}[Proof Sketch - see the appendix for the full proof]
Collapse SCCs to obtain a standard Borel quotient \(\widetilde V\) with a
Borel acyclic partial order. Use measurable selection (Kuratowski–Ryll‑Nardzewski)
to choose minimal elements in a Borel way and enumerate them; assign
rational labels in increasing order to obtain a Borel linear extension,
then lift to \(V\).
\end{proof}

\begin{remark}
Interval‑finiteness and uniqueness in continuum
If the quotient poset \(X\) is interval‑finite (finite intervals along
every pair) then the cover relation exists and yields a unique transitive
reduction; this is a strong structural condition that often fails in
continuum models with accumulation points.

\end{remark}

\section{Examples and counterexamples}\label{sec:examples}
\subsection{Piecewise linear continuum tree}
Let \(g\) be piecewise linear with peaks at \(0.25,0.75\) and valley at
\(0.5\) as in the running example. The quotient tree \(T_g\) contains
classes \(A=[0.25],V=[0.5],B=[0.75]\). Selecting \(N=\{A,V,B\}\) yields a
finite path \(A-V-B\) and (H) applies.

\subsection{Dense order without covers}
Let \(X\) be order‑isomorphic to \(\mathbb Q\cap[0,1]\). There are no
covers and no transitive reduction; a time labeling exists (embedding into
\(\mathbb R\)) but no minimal generating edge set exists.

\subsection{Zeno accumulation}
Let classes at \(t_n=1-2^{-n}\) and a limit class at \(t_\infty=1\).
Information sets distinguishing each \(t_n\) produce accumulation and
may violate interval‑finiteness; discrete ranks fail and continuous
height may assign identical limit values to many classes.

\section{Differential‑game application}\label{sec:differential}
We illustrate the continuum construction with a differential game (see Ibragimov, Tursunaliev and Luckraz (2024) for example).

\subsection{Model}
State \(x(t)\in\mathbb R\) on \(t\in[0,1]\) evolves by

\[
\dot x(t)=u(t)+v(t),\qquad x(0)=x_0,
\]

with controls \(u\in\mathcal U\), \(v\in\mathcal V\) measurable in
compact sets \(U,V\). Observations: players observe \(y(t)=x(t)+\eta(t)\)
with \(\eta\) a continuous noise path sampled at the start. Decision
times \(\mathcal T\subset[0,1]\) determine cut times.

\subsection{Continuum tree and information sets}
Paths \(\gamma\in C([0,1],\mathbb R)\) form \(\mathcal P\). Fix cut times
\(0<t_1<\dots<t_K<1\). Information sets at \(t_k\) are Borel subsets of
\(\pi_{t_k}(\mathcal P)\). The continuum tree \(T_g\) with \(g(s)=s\)
identifies histories with time coordinates; (H) holds and \(f(I)=\inf_{h\in I}h\)
gives the natural clock.

\subsection{Accumulation example}
If \(\mathcal T\) accumulates at \(t_\infty\), and information sets
distinguish arbitrarily fine observations near \(t_\infty\), interval
finiteness fails. Remedies: coarsen information sets at the accumulation
point or model decisions via filtrations and stopping times.

\section{Conclusion}

In this paper, we analyzed time structure in extensive games across finite, countable, and continuum informational digraphs. By lifting the quotient/equivalence viewpoint we have unified finite characterizations (Luckraz and Pansera 2023) with continuum representation theory and shown precisely which structural and topological hypotheses are required to carry finite intuitions into the continuum. Our existence and uniqueness results for transitive reductions under interval‑finiteness, together with the necessary and sufficient conditions for real‑valued time labelings, can apply when assessing whether a proposed extensive‑form model admits a coherent global clock.

Beyond these foundational results, the paper solves the time structure problem for a continuum tree by using  measurable selection constructions in Polish spaces that produce Borel time labelings when the topological hypotheses hold. These contributions make the theory appealing  for applied researchers working on dynamic games, differential games, and mechanism design in continuous time, and they clarify connections to computational and equilibrium issues raised in the literature (von Stengel and Forges 2008; Hansen, Ibsen, and Neyman 2021). The differential game example demonstrates how continuum trees and quotient posets arise in natural economic and control problems and how our methods resolve representational ambiguities that would otherwise impede analysis.

Our work also points to several directions for future work. First, relaxing or replacing the sigma compactness and order density hypotheses in specific application domains could broaden applicability while preserving measurable constructions. Second, a systematic complexity analysis of the algorithms we provide would clarify their practical limits and suggest optimized implementations. Third, stochastic and dynamic extensions, where information sets evolve randomly or depend on continuous signals, would test the robustness of the quotient approach in probabilistic environments. Finally, exploring implications for equilibrium existence and refinement in continuum informational settings may yield new insights into how time structure interacts with strategic behavior, complementing classical sequence‑based and graph‑based equivalences (Kline and Luckraz 2016) and enriching the toolkit available to theorists and practitioners working at the interface of game theory, topology, and computation.

\section{Appendix}
This appendix collects full, self‑contained proofs of the main technical
results stated in the paper. We repeat minimal notation for convenience.

\subsection*{Notation and preliminaries}

\begin{itemize}
  \item $G=(V,E)$ denotes a directed graph (possibly finite, countable, or
    with a topological vertex set).
  \item For $u,v\in V$, $uR_G v$ (or simply $uR v$) means $v$ is reachable
    from $u$ via a finite directed walk in $G$.
  \item $\sim$ denotes mutual reachability: $u\sim v$ iff $uR v$ and $vR u$.
  \item $X=V/\!\sim$ denotes the quotient set of equivalence classes; we
    write $[u]$ for the class of $u$.
  \item For a partial order $(X,\preceq)$, $x\prec y$ means $x\preceq y$ and
    $x\neq y$. The interval $[x,y]=\{z\in X: x\preceq z\preceq y\}$.
  \item A \emph{transitive reduction} of a reachability relation is a
    directed graph whose transitive closure equals the reachability
    relation and whose edge set is minimal with that property.
\end{itemize}

\subsection*{Finite case: uniqueness of transitive reduction}
\begin{theorem}[Finite uniqueness]
Let $G=(V,E)$ be a finite directed graph. Collapse mutual reachability
classes to obtain the quotient poset $(X,\preceq)$. Then the cover relation
$C$ on $X$ exists, the transitive reduction of $(X,\prec)$ exists, is
unique, and equals $C$. Lifting $C$ to $V$ (choosing representatives)
yields the unique transitive reduction of $G$ up to edges internal to
strongly connected components.
\end{theorem}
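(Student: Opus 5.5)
The proof proceeds in three steps: show the quotient is a finite poset, apply the finite case of Theorem~\ref{thm:generalized-finite-countable} (or argue directly with finite intervals), and then lift back to $G$.

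\textbf{Step 1: the quotient is a finite poset.} First I will check that $\sim$ (mutual reachability) is an equivalence relation on $V$. Reflexivity holds because $R$ includes the empty walk, symmetry is built into the definition, and transitivity follows by concatenating walks. Next, $\preceq$, defined by $[u]\preceq[v]$ iff $uRv$, is well defined, since representatives are mutually reachable and walks concatenate. It is reflexive and transitive for the same reasons. It is antisymmetric because $[u]\preceq[v]\preceq[u]$ means $uRv$ and $vRu$, that is, $u\sim v$. Here collapsing \emph{all} SCCs, rather than only the $\sim_S$-classes, makes antisymmetry genuinely automatic. So $(X,\preceq)$ is a finite poset, and in particular it is acyclic.

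\textbf{Step 2: the cover relation is the unique transitive reduction.} Since $X$ is finite, every interval $[x,y]$ is finite, so part (3) of Theorem~\ref{thm:generalized-finite-countable} yields the conclusion directly. To keep the appendix self-contained, I will also spell out the argument. For $x\prec y$, take a maximal chain in the finite interval $[x,y]$; its consecutive pairs are covers, so the transitive closure of $C$ is $\prec$. If $F$ is any edge set whose transitive closure is $\prec$ and $(a,b)\in C$, then some $F$-path runs from $a$ to $b$. If that path had length at least $2$, its intermediate vertex $z$ would satisfy $a\prec z\prec b$ (using antisymmetry to get strictness), which contradicts the cover property. Hence $(a,b)\in F$, so $C\subseteq F$. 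Therefore $C$ is contained in every generating set and is itself generating, so it is the unique minimal one.

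\textbf{Step 3: lifting to $G$.} For each cover $([u],[v])\in C$, there is a walk from $u$ to $v$ in $G$. Consider the first edge of such a walk that leaves the class $[u]$. It must enter a class $z$ with $[u]\prec z\preceq[v]$, and the cover property forces $z=[v]$. So there is a raw edge $(u',v')\in E$ with $u'\in[u]$ and $v'\in[v]$. Choose one such edge per cover, and add, for each SCC, any strongly connected spanning subgraph of its internal edges (for example, a minimal one). The union of these edges has the same reachability as $G$. Removing a chosen inter-class edge destroys the reachability $[u]\to[v]$, because any other route would pass through an intermediate class. Thus the inter-class part is forced up to the choice of representatives, and all remaining freedom lies in edges internal to SCCs.

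\textbf{Main obstacle.} Step 3 is the delicate part. Within an SCC a transitive reduction is genuinely non-unique (a cycle versus other spanning strongly connected subgraphs), and even the inter-class edge realizing a cover can be chosen among several representative pairs. So the ``uniqueness up to edges internal to SCCs'' claim must be phrased as uniqueness of the induced class-level edge set. I would state it that way explicitly: two minimal generating edge sets of $G$ project to the same set $C$ on $X$, and they differ only inside SCCs or in the choice of endpoints within classes.
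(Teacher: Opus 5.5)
Your proposal is correct and follows the same route as the paper's proof. You show that the collapsed quotient is a finite poset, prove that the cover relation lies in every generating set and itself generates $\prec$, then lift one raw edge per cover and add SCC-internal edges. Your version is tighter than the paper's at two points: you use a maximal (saturated) chain in $[x,y]$, whereas the paper's ``chain of minimal length'' does not force covers; and your first-edge-leaving-$[u]$ argument shows that each cover is realized by a single edge of $E$, which the paper only asserts. Your caveat that the lift is unique only up to the choice of endpoints within classes, not just up to SCC-internal edges, is also a correct sharpening of the statement.
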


\begin{proof}
Because $V$ is finite, $X$ is finite. The relation $\preceq$ on $X$ is a
partial order: reflexivity and transitivity follow from reachability, and
antisymmetry holds on classes by construction.

Fix $x\prec y$ in $X$. The interval $[x,y]$ is finite, so there exists a
chain $x=z_0\prec z_1\prec\cdots\prec z_m=y$ of minimal length $m\ge1$.
By minimality each consecutive pair $z_k\prec z_{k+1}$ is a cover: if
there were $z$ with $z_k\prec z\prec z_{k+1}$ we could shorten the chain.
Thus every ordered pair is reachable via a finite sequence of cover edges,
so the transitive closure of $C$ equals $\prec$.

Minimality: suppose $F\subseteq X\times X$ is any edge set whose transitive
closure equals $\prec$. For any cover pair $(z_k,z_{k+1})\in C$, if
$(z_k,z_{k+1})\notin F$ then there must be a path from $z_k$ to $z_{k+1}$
using edges in $F$, which would pass through some intermediate $z$ with
$z_k\prec z\prec z_{k+1}$, contradicting that $(z_k,z_{k+1})$ is a cover.
Hence every cover pair must belong to $F$, so $C\subseteq F$. Therefore
$C$ is contained in every generating edge set and is minimal; uniqueness
follows.

Lifting to $V$: for each cover pair $([u],[v])\in C$ choose one edge
$(u',v')\in E$ with $u'\in[u], v'\in[v]$ that witnesses the class
precedence (such an edge exists by definition of the quotient). The set of
these chosen edges, together with arbitrary choices inside SCCs if needed,
yields an edge set on $V$ whose transitive closure equals $R_G$ and which
is minimal up to edges internal to SCCs. This completes the proof.
\end{proof}

\subsection*{Countable case: maximal walks and unique transitive reduction}
\subsubsection{Existence of maximal walks}
\begin{lemma}[Existence of maximal walks]
Let $G=(V,E)$ be an acyclic directed graph (countable). If there is a walk
from $u$ to $v$, then there exists a maximal walk from $u$ to $v$ (maximal
with respect to subsequence inclusion).
\end{lemma}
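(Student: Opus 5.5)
The plan is to run a Zorn's Lemma argument, as in the Lemma on maximal chains on the quotient, but at the level of vertices of $G$ rather than classes.

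\textbf{Reformulation.} Since $G$ is acyclic, any walk from $u$ to $v$ visits pairwise distinct vertices. Otherwise a repeated vertex would close a directed cycle. So a walk $u=w_0,w_1,\dots,w_k=v$ is determined by its vertex set $W=\{w_0,\dots,w_k\}$. This set is totally ordered by the reachability relation $R$, which is antisymmetric by acyclicity, and has least element $u$ and greatest element $v$. Subsequence inclusion of walks is then exactly set inclusion of these vertex sets. Let $\mathcal W(u,v)$ be the family of such vertex sets, ordered by $\subseteq$. It is nonempty by hypothesis.

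\textbf{Zorn step.} To apply Zorn we pass to the larger family $\mathcal C(u,v)$ of all sets $W\subseteq V$ with the following properties:
\begin{enumerate}[leftmargin=*]
\item $u,v\in W$;
\item $W$ is totally ordered by $R$;
\item every $w\in W$ satisfies $uRw$ and $wRv$.
\end{enumerate}
Each of these properties is preserved under unions of $\subseteq$-chains. Any two elements of the union already lie in a single member of the chain, so they are comparable. Hence every chain in $\mathcal C(u,v)$ has an upper bound, namely its union, and Zorn's Lemma yields a $\subseteq$-maximal $W^\ast\supseteq W_0$, where $W_0$ is the vertex set of the given walk. Countability of $V$ is not needed for Zorn. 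It does allow an alternative, choice-free construction: enumerate $V=\{x_1,x_2,\dots\}$ and greedily insert $x_n$ whenever the enlarged set still lies in $\mathcal C(u,v)$.

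\textbf{Returning to walks.} It remains to show that $W^\ast$ is the vertex set of an actual walk, and that it is maximal among walks. Maximality among walks follows immediately from maximality in $\mathcal C(u,v)\supseteq\mathcal W(u,v)$. For the walk property, list $W^\ast$ in $R$-order. If $w\prec w'$ are consecutive in $W^\ast$, then any vertex strictly between them on a walk from $w$ to $w'$ could be added to $W^\ast$, contradicting maximality. So $w'$ is reached from $w$ in one step, and $(w,w')\in E$. Consecutive elements are therefore joined by edges.

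\textbf{Main obstacle.} The hard step is showing that $W^\ast$ is \emph{finite} and has no "gaps", meaning no pair $w\prec w'$ without an immediate successor in between. Only then is the $R$-ordered listing a genuine finite walk. This does not follow from acyclicity alone: a dense configuration like the $\mathbb Q\cap[0,1]$ example can produce an infinite maximal chain. My first attempt would exploit the fact that $W_0$ is a finite walk whose edges $(w_{j-1},w_j)$ are single steps. I would then try to show that insertions into each edge-gap terminate. Where they do not, the argument needs the interval-finiteness of $\{z: uRz,\ zRv\}$, as in Theorem~\ref{thm:generalized-finite-countable}(2), to bound the length of chains in $\mathcal C(u,v)$. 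With that bound, every chain in $\mathcal W(u,v)$ is already finite, and Zorn's Lemma can be applied directly within $\mathcal W(u,v)$.
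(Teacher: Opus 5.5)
Your reduction is sound as far as it goes. Zorn's Lemma, or your greedy enumeration, on $\mathcal C(u,v)$ does give a $\subseteq$-maximal $R$-chain $W^\ast\supseteq W_0$. Your argument that $R$-consecutive elements of $W^\ast$ must be joined by an edge is also correct. But the step you flag as the main obstacle is a genuine gap, and it cannot be closed under the stated hypotheses, because the lemma is false as stated.

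Take $V=\mathbb Q\cap[0,1]$, $E=\{(p,q):p<q\}$, $u=0$, $v=1$. This digraph is countable and acyclic, and $0,1$ is a walk. Yet every finite walk $0=w_0<w_1<\dots<w_k=1$ is a proper subsequence of the walk obtained by inserting $(w_0+w_1)/2$, so no maximal walk exists. In this example your $W^\ast$ is an infinite dense chain with no $R$-consecutive pairs at all. The repair you propose, finiteness of $\{z: uRz,\ zRv\}$, is an extra hypothesis, not something you can derive. Once it is assumed, the result is immediate without Zorn: walks from $u$ to $v$ visit distinct vertices of a finite set, so they can be properly extended only finitely often.

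The paper's proof applies Zorn directly to finite walks. It asserts that listing the vertices of a chain of walks in order of first appearance gives a finite walk $\gamma$ bounding the chain. That assertion fails exactly when the chain is infinite. In the example above, the walks $\alpha_n$ listing $\{j2^{-n}:0\le j\le 2^n\}$ in increasing order form a chain with no finite upper bound. So the paper's argument has the same hole you located, and it does not fill it.

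The failure propagates. The appendix lemma on unique transitive reductions of countable acyclic graphs relies on this one, and on this example it gives $F=\varnothing$. That matches the paper's own remark that $\mathbb Q\cap[0,1]$ admits no transitive reduction. The correct statement needs an added hypothesis, such as finiteness of the interval between $u$ and $v$ or a bound on walk lengths. Under that hypothesis your argument goes through.
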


\begin{proof}
Let $\Theta$ be the set of all finite walks from $u$ to $v$. Define a
partial order on $\Theta$ by $\alpha\preceq\beta$ iff $\alpha$ is a
subsequence of $\beta$ (i.e., $\alpha$ can be obtained by deleting some
vertices from $\beta$ while preserving order). We will apply Zorn's Lemma
to $(\Theta,\preceq)$.

Take any chain $\mathcal C\subseteq\Theta$ totally ordered by $\preceq$.
We claim $\mathcal C$ has an upper bound in $\Theta$. Because $\mathcal C$
is totally ordered by subsequence, for any two walks $\alpha,\beta\in\mathcal C$
either $\alpha$ is a subsequence of $\beta$ or vice versa. Enumerate the
walks in $\mathcal C$ as $\alpha_1\preceq\alpha_2\preceq\cdots$ (finite or
countable). Construct a finite sequence $\gamma$ by listing the vertices
in the order they first appear in the chain: start with the vertices of
$\alpha_1$, then append the new vertices that appear in $\alpha_2$ but
not in $\alpha_1$, and so on. Because each $\alpha_i$ is finite and
acyclicity prevents repetition along a walk, this process yields a finite
walk $\gamma$ from $u$ to $v$ that contains every $\alpha\in\mathcal C$
as a subsequence. Thus $\gamma$ is an upper bound of $\mathcal C$.

By Zorn's Lemma, $\Theta$ has a maximal element with respect to $\preceq$,
i.e., a walk that is not a proper subsequence of any other walk from $u$
to $v$. This is the desired maximal walk.
\end{proof}

\subsection*{Uniqueness of transitive reduction in acyclic countable graphs}
\begin{lemma}[Unique transitive reduction]
Let $G=(V,E)$ be an acyclic directed graph (countable). Define

\[
F:=\{(u,v)\in E:\ \nexists\, w\in V\setminus\{u,v\}\ \text{with } uR w \text{ and } wR v\}.
\]

Then $H=(V,F)$ is the unique transitive reduction of $G$.
\end{lemma}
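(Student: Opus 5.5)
The plan is to prove three things in order: the transitive closure of \(F\) equals \(R_G\) restricted to distinct pairs; every edge set with the same transitive closure contains \(F\); and therefore \(F\) is the minimal, and hence unique, transitive reduction. Acyclicity is used throughout in one specific form: if \(uRw\) and \(wRv\) with \(u\neq v\), then a walk from \(u\) to \(v\) never revisits a vertex. In particular, any intermediate vertex of a walk of length at least two from \(u\) to \(v\) is distinct from both \(u\) and \(v\).

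\emph{Generation.} Since \(F\subseteq E\), the transitive closure of \(F\) is contained in \(R_G\). For the reverse inclusion, fix \(uRv\) with \(u\neq v\). I would invoke the preceding Lemma (existence of maximal walks) to obtain a walk \(u=a_0,a_1,\dots,a_m=v\) that is maximal under subsequence inclusion. I then claim each step \((a_{k},a_{k+1})\) lies in \(F\). If not, there is some \(w\notin\{a_k,a_{k+1}\}\) with \(a_kRw\) and \(wRa_{k+1}\). Splicing a walk \(a_k\to w\to a_{k+1}\) in place of the single edge yields a walk from \(u\) to \(v\). By acyclicity it is still a legitimate walk with no repeated vertex, and it properly contains the original as a subsequence, which contradicts maximality. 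Hence \(u\) reaches \(v\) through \(F\)-edges.

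\emph{Every generating set contains \(F\).} Let \(F'\subseteq E\) be any edge set whose transitive closure equals \(R_G\), and suppose \((u,v)\in F\setminus F'\). Then there is an \(F'\)-walk from \(u\) to \(v\) of length at least \(2\). By acyclicity, any interior vertex \(w\) of that walk is distinct from \(u\) and \(v\) and satisfies \(uRw\) and \(wRv\). This contradicts the defining property of \(F\), so \(F\subseteq F'\).

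\emph{Conclusion.} Combining the two steps, \(F\) generates \(R_G\). Any proper subset of \(F\) would be a generating set not containing \(F\), which is impossible, so \(F\) is minimal and \(H=(V,F)\) is a transitive reduction. If \(F'\) is any transitive reduction, then \(F\subseteq F'\) and \(F\) already generates \(R_G\), so minimality of \(F'\) forces \(F'=F\). This gives uniqueness.

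The main obstacle is the generation step, specifically its reliance on the maximal-walk Lemma. In a general countable acyclic graph, such as a dense order like \(\mathbb Q\cap[0,1]\) discussed above, \(F\) can be empty while \(R_G\) is not, and the argument would then fail. I would therefore make the dependence explicit: the proof uses exactly that every walk can be refined to a maximal one, which is what the Lemma supplies (and which is guaranteed, for instance, by interval-finiteness of the reachability order). If the Zorn argument in that Lemma is in doubt, I would instead refine an arbitrary walk by repeatedly inserting intermediate vertices. Interval-finiteness bounds the length of any walk from \(u\) to \(v\), so this process must terminate, and when it does the walk is maximal.
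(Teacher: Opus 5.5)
Your three steps (generation through a maximal walk, every generating set contains $F$, then minimality and uniqueness) are exactly the paper's argument. The weak point you flag is more than a risk: it is fatal to the statement as written, so the step ``invoke the preceding Lemma'' cannot be carried out.

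In your own example $V=\mathbb Q\cap[0,1]$, $E=\{(p,q):p<q\}$, the graph is countable and acyclic. Every edge $(p,q)$ has the witness $w=\tfrac{p+q}{2}$, so $F=\varnothing$ and $H$ generates nothing. In fact no transitive reduction exists at all. Suppose $F'$ generates $R_G$, take $(a,b)\in F'$ with $a<b$, and choose $a<c<b$. The $F'$-paths from $a$ to $c$ and from $c$ to $b$ cannot use the edge $(a,b)$, so $F'\setminus\{(a,b)\}$ still generates, and no generating set is minimal. The paper says as much in its discussion of dense orders, which contradicts this appendix lemma.

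The maximal-walk lemma fails on the same example, because every walk can be refined: $0,1$ becomes $0,\tfrac12,1$, then $0,\tfrac14,\tfrac12,1$, and so on. The error in its Zorn argument is that the ``upper bound'' $\gamma$ of an infinite strictly increasing chain of finite walks is an infinite sequence, and acyclicity does nothing to prevent that. So rather than hedging while still citing that lemma, you should present the counterexample and prove a corrected statement.

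Your proposed repair is correct. Add the hypothesis that the reachability order of $G$ is interval-finite, or merely that for each $uRv$ the walks from $u$ to $v$ have bounded length. By acyclicity every such walk is a path whose vertices lie in $[u,v]$, so a walk of maximal length exists. Your splicing argument then shows that each of its edges lies in $F$, and your second and third steps go through unchanged.

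It helps to note that $F$ is always exactly the cover relation of $R_G$, since a cover pair can only be joined by a single edge of $E$. This makes clear that the only real issue is whether the covers generate $R_G$.

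One small point: the paper's definition of transitive reduction does not require $F'\subseteq E$, so in the uniqueness step take $F'$ arbitrary. The argument is the same. Since $F'\subseteq R_G$ and a shortest $F'$-walk from $u$ to $v$ repeats no vertex, an interior vertex $w$ of that walk still satisfies $w\neq u,v$, $uRw$ and $wRv$.
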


\begin{proof}
We show three properties: (i) $R_H=R_G$, (ii) $F$ is minimal, and (iii)
$F$ is unique.

(i) Since $F\subseteq E$, $R_H\subseteq R_G$ is immediate. Conversely,
suppose $uR_G v$. By the previous lemma there exists a maximal walk
$u=I_1,\dots,I_k=v$. For each consecutive pair $(I_j,I_{j+1})$ in this
maximal walk, there cannot exist any intermediate vertex $w$ with
$I_jR w$ and $wR I_{j+1}$; otherwise the walk would not be maximal. Thus
each consecutive pair belongs to $F$. Hence $uR_H v$. Therefore $R_H=R_G$.

(ii) Minimality: if we remove any edge $(u,v)\in F$ from $H$, then by
definition there is no $w\neq u,v$ with $uR w$ and $wR v$. Thus removing
$(u,v)$ breaks reachability from $u$ to $v$ in the remaining graph, so
the transitive closure no longer equals $R_G$. Therefore $F$ is minimal.

(iii) Uniqueness: suppose $F'$ is another minimal edge set with transitive
closure equal to $R_G$. If $F'\neq F$ then there exists $(u,v)\in F\setminus F'$.
But since $F'$ generates $R_G$, there must be a path from $u$ to $v$ using
edges in $F'$, which implies the existence of some intermediate $w$ with
$uR w$ and $wR v$, contradicting the definition of $(u,v)\in F$. Hence
$F'=F$.

This proves uniqueness.
\end{proof}

\subsection*{Interval‑finite posets and transitive reductions}
\begin{theorem}
Let $(X,\preceq)$ be a poset. If $(X,\preceq)$ is interval‑finite (every
interval $[x,y]$ is finite) then the cover relation $C$ exists, the
transitive reduction exists, is unique, and equals $C$.
\end{theorem}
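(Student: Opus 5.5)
The plan is to work entirely inside a fixed interval and use finiteness there to replace any appeal to Zorn's Lemma. Since $(X,\preceq)$ is a poset, antisymmetry rules out directed cycles in $\prec$, so no separate acyclicity hypothesis is needed. The proof has three steps: the cover relation is nonempty where it must be and generates $\prec$; every generating edge set contains $C$; and $C$ is itself minimal. "Exists" for $C$ will be read as this nondegeneracy: for every $x\prec y$ there is at least one cover $x\lessdot z\preceq y$.

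\emph{Step 1 (generation).} Fix $x\prec y$. The interval $[x,y]$ is finite, so the set of chains $x=z_0\prec z_1\prec\cdots\prec z_m=y$ inside $[x,y]$ is nonempty (it contains $x\prec y$). Every such chain has length at most $|[x,y]|-1$, because antisymmetry forbids repeated elements. Choose a chain of \emph{maximal} length. If some consecutive pair $z_k\prec z_{k+1}$ were not a cover, there would be $z$ with $z_k\prec z\prec z_{k+1}$. Transitivity gives $z\in[x,y]$, so inserting $z$ would produce a longer chain, a contradiction. Hence every step is a cover, and the transitive closure of $C$ contains $\prec$. The reverse inclusion holds because $C\subseteq\prec$ and $\prec$ is transitive. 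Note that the earlier proofs in the paper pick a chain of \emph{minimal} length; I will switch to maximal length, since only the maximal-length chain yields covers.

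\emph{Step 2 (every generating set contains $C$).} Let $F\subseteq X\times X$ have transitive closure equal to $\prec$, and take $(a,b)\in C$. Then $a\prec b$, so there is an $F$-path $a=w_0,w_1,\dots,w_r=b$. Each edge of $F$ lies in $\prec$, so $a\preceq w_j\preceq b$ for every $j$. If $r\ge2$, then $w_1$ satisfies $a\prec w_1$ and $w_1\preceq b$. Antisymmetry and the absence of $\prec$-loops force $w_1\neq b$, because otherwise $b\prec w_2\preceq b$. So $a\prec w_1\prec b$, contradicting that $b$ covers $a$. Therefore $r=1$ and $(a,b)\in F$, which gives $C\subseteq F$.

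\emph{Step 3 (minimality and uniqueness).} By Step 1, $C$ generates $\prec$. By Step 2, any generating $F\subseteq C$ must equal $C$, so $C$ is minimal and is a transitive reduction. If $R$ is any transitive reduction, Step 2 gives $C\subseteq R$. Since $C$ already generates $\prec$, minimality of $R$ forces $R=C$. This proves uniqueness.

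The main obstacle is Step 1, specifically getting covers to exist between arbitrary comparable elements without any global finiteness of $X$. The argument will be kept local to $[x,y]$, with two points handled carefully. First, the chain must be taken of maximal rather than minimal length. Second, the inserted element $z$ must be checked to lie in $[x,y]$, which is what bounds the chain lengths there.
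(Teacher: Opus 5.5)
Your proof is correct and follows the same skeleton as the paper's. A chain through the finite interval $[x,y]$ whose consecutive steps are covers shows that $C$ generates $\prec$; and since a cover pair cannot be realized by an $F$-path with an intermediate vertex, $C\subseteq F$ for every generating set $F$, which gives minimality and uniqueness. Your switch to a chain of \emph{maximal} length is exactly the right repair: the paper's minimal-length chain is just $x\prec y$ itself, and inserting an intermediate element lengthens a chain rather than shortening it, so only your version actually produces covers.
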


\begin{proof}
Fix $x\prec y$. Because $[x,y]$ is finite there exists a chain
$x=z_0\prec z_1\prec\cdots\prec z_m=y$ of minimal length $m$. By
minimality each consecutive pair $z_k\prec z_{k+1}$ is a cover; otherwise
we could insert an intermediate element and shorten the chain. Thus every
$x\prec y$ is reachable by a finite sequence of cover edges, so the
transitive closure of $C$ equals $\prec$. Minimality and uniqueness
follow by the same argument as in the finite case: omitting any cover edge
would break reachability between its endpoints, so $C$ is contained in
every generating edge set and hence is the unique minimal generating set.
\end{proof}

\subsection*{Topological sorting for countable acyclic digraphs}
\begin{theorem}
Let $G=(V,E)$ be an acyclic directed graph with $V$ countable. Then there
exists $f:V\to\mathbb Q$ such that $(u,v)\in E\Rightarrow f(u)<f(v)$.
\end{theorem}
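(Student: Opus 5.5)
We build \(f\) by the classical Szpilrajn/Cantor back-and-forth idea: insert the vertices one at a time into a dense linear order of rationals. Since \(V\) is countable, fix an enumeration \(V=\{v_1,v_2,\dots\}\) (finite or infinite). Let \(R\) be the reachability relation of \(G\). Because \(G\) is acyclic, \(uRv\) and \(vRu\) force \(u=v\), so \(R\) is a partial order on \(V\), and every edge \((u,v)\in E\) gives \(uRv\) with \(u\neq v\). It is therefore enough to produce \(f:V\to\mathbb Q\) that is strictly increasing for the strict part of \(R\).

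We define \(f(v_n)\) by recursion on \(n\), maintaining the invariant that on \(V_{n}=\{v_1,\dots,v_n\}\), \(f\) is injective and satisfies \(uRw,\ u\neq w \Rightarrow f(u)<f(w)\). At stage \(n+1\) put
\[
L=\{u\in V_n: uRv_{n+1}\},\qquad U=\{w\in V_n: v_{n+1}Rw\}.
\]
By acyclicity and \(v_{n+1}\notin V_n\), these sets are disjoint. Moreover, for \(u\in L\) and \(w\in U\) we get \(uRw\) by transitivity, with \(u\neq w\) (otherwise a cycle through \(v_{n+1}\) would arise). The invariant then gives \(\max f(L)<\min f(U)\), with the conventions \(\max\emptyset=-\infty\) and \(\min\emptyset=+\infty\). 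The open interval \((\max f(L),\min f(U))\) contains infinitely many rationals, since \(\mathbb Q\) is dense and unbounded. Choose one not already in \(f(V_n)\), for definiteness the one of least index in a fixed enumeration of \(\mathbb Q\), and call it \(f(v_{n+1})\). The invariant is preserved: any pair involving \(v_{n+1}\) related by \(R\) has its other member in \(L\) or \(U\), and the inequalities hold by the choice of the value.

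Since every pair of vertices lies in some \(V_n\), the resulting \(f\) works globally, and in particular \((u,v)\in E\Rightarrow f(u)<f(v)\). No choice principle beyond the fixed enumerations is needed, so the construction is explicit, and recursive when \(E\) is decidable and reachability is computable.

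The main point needing care is the separation step \(\max f(L)<\min f(U)\). This is exactly where acyclicity enters, through transitivity of \(R\) and the exclusion of \(u=w\). It also relies on comparing \(v_{n+1}\) with earlier vertices via full reachability \(R\), not just via single edges, since otherwise later vertices could force incompatible constraints. Everything else is routine density of \(\mathbb Q\).
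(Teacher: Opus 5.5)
Your proof is correct, but it takes a different route from the paper's. The paper enumerates $V$, repeatedly removes a vertex $w_k$ of in-degree zero in a finite induced subgraph of the not-yet-removed vertices, and sets $f(w_k)=k$. In effect it tries to build a linear extension of order type $\omega$ with integer labels. You instead insert the vertices one by one into the dense order $\mathbb Q$, placing $v_{n+1}$ in the gap $(\max f(L),\min f(U))$ determined by full reachability to and from the earlier vertices. Acyclicity, in the form of antisymmetry of $R$, is what shows that gap is nonempty.

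The difference matters. Labels increasing in the removal index can work only if every vertex has finitely many ancestors. So the paper's scheme cannot handle, for example, $V=\mathbb Z$ with $E=\{(n,n+1)\}$: it would require the infinite descending sequence $f(0)>f(-1)>f(-2)>\cdots$ of positive integers. Relatedly, a source of a finite induced subgraph can still have in-neighbours that appear later in the enumeration, so the step "$u$ must appear earlier than $v$" is not justified there.

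When every vertex does have finitely many ancestors, a correctly implemented source-removal scheme (always remove a vertex all of whose ancestors are already gone) gives a discrete, integer-valued clock in which each vertex is preceded by only finitely many others. Your density argument gives up that discreteness but proves the statement for arbitrary countable acyclic digraphs, which is the generality claimed. Your remark that the new vertex must be compared via reachability rather than single edges is exactly what makes the insertion step sound.
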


\begin{proof}
Enumerate $V=\{v_1,v_2,\dots\}$. We construct a topological ordering by
iteratively removing vertices of in‑degree zero from the remaining graph.

At stage $k$ consider the finite induced subgraph on the first $N_k$
vertices not yet removed (choose $N_k$ large enough to include some
unremoved vertices). Because the induced subgraph is finite and acyclic,
it has a vertex of in‑degree zero. Choose such a vertex $w_k$, append it
to the ordering, and remove it from the remaining graph. Continue this
process; every vertex will eventually be removed because each step
removes one vertex and the graph is countable. Assign rational labels
$f(w_k)=k$ (or any strictly increasing rational sequence). Then for any
edge $(u,v)\in E$, $u$ must appear earlier than $v$ in the ordering, so
$f(u)<f(v)$.
\end{proof}
\subsection*{Counter-examples}
In this subsection we provide some counter-examples of games where a time structure may not exist.
\begin{figure}[htbp]
\begin{tikzpicture}[
    x=1cm,
    y=1cm,
    branch/.style={
        draw=black,
        line width=1.15pt,
        line cap=round
    },
    nodecircle/.style={
        circle,
        fill=black,
        inner sep=2.4pt
    },
    chance/.style={
        rectangle,
        fill=black,
        inner sep=3.0pt
    },
    infoset/.style={
        draw=black,
        line width=0.8pt
    },
    lab/.style={
        font=\Large\itshape,
        inner sep=1pt
    },
    num/.style={
        font=\Large,
        inner sep=1pt
    },
    payoff/.style={
        font=\Large,
        align=center,
        inner sep=1pt
    }
]


\coordinate (C)  at (0,5.0);

\coordinate (LU) at (-2.0,3.0);
\coordinate (RU) at (2.0,3.0);

\coordinate (LL) at (-2.0,0.8);
\coordinate (RL) at (2.0,0.8);

\coordinate (TL)  at (-4.0,0.8);
\coordinate (LB)  at (-3.0,-1.4);
\coordinate (RB)  at (-1.0,-1.4);

\coordinate (ML)  at (1.0,-1.4);
\coordinate (MR)  at (3.0,-1.4);
\coordinate (TR)  at (4.0,0.8);


\draw[branch] (C) -- (LU);
\draw[branch] (C) -- (RU);

\draw[branch] (LU) -- (TL);
\draw[branch] (LU) -- (RL);

\draw[branch] (RU) -- (LL);
\draw[branch] (RU) -- (TR);

\draw[branch] (LL) -- (LB);
\draw[branch] (LL) -- (RB);

\draw[branch] (RL) -- (ML);
\draw[branch] (RL) -- (MR);


\draw[infoset] (-2.0,1.9) ellipse [x radius=0.43, y radius=1.55];
\draw[infoset] (2.0,1.9) ellipse [x radius=0.43, y radius=1.55];

\node[num] at (-2.0,1.75) {I};
\node[num] at (2.0,1.75) {II};


\node[chance] at (C) {};

\node[nodecircle] at (LU) {};
\node[nodecircle] at (RU) {};
\node[nodecircle] at (LL) {};
\node[nodecircle] at (RL) {};


\node[num] at (-1.25,5.08) {chance};

\node[num] at (-1.35,4.18) {$\frac{1}{2}$};
\node[num] at (1.35,4.18) {$\frac{1}{2}$};


\node[lab] at (-2.75,2.75) {$T$};
\node[lab] at (-1.35,2.85) {$B$};

\node[lab] at (1.30,2.85) {$l$};
\node[lab] at (2.65,2.75) {$r$};

\node[lab] at (-2.75,0.20) {$T$};
\node[lab] at (-1.35,0.20) {$B$};

\node[lab] at (1.30,0.20) {$l$};
\node[lab] at (2.65,0.20) {$r$};


\node[payoff] at (-4.10,0.15) {$0$\\$0$};

\node[payoff] at (-3.00,-2.05) {$0$\\$6$};

\node[payoff] at (-1.00,-2.05) {$2$\\$0$};

\node[payoff] at (1.00,-2.05) {$2$\\$0$};

\node[payoff] at (3.00,-2.05) {$-2$\\$2$};

\node[payoff] at (4.10,0.15) {$2$\\$0$};

\end{tikzpicture}
\caption{The above game is weakly time-strucutured (under symmetric equivalent classes) but not strong time structured}
  \label{fig:trees2}
\end{figure}
\begin{figure}[htbp]
\begin{tikzpicture}[
    x=1cm,
    y=1cm,
    >=Stealth,
    branch/.style={
        ->,
        draw=black,
        line width=1.05pt,
        line cap=round,
        line join=round
    },
    infoset/.style={
        draw=black,
        line width=0.9pt
    },
    decision/.style={
        circle,
        fill=black,
        inner sep=2.1pt
    },
    terminal/.style={
        circle,
        fill=black,
        inner sep=1.9pt
    },
    player/.style={
        font=\Large,
        inner sep=1pt,
        fill=white
    },
    act/.style={
        font=\large,
        inner sep=1.2pt,
        fill=white
    }
]


\coordinate (N11) at (0,6.35);

\coordinate (N21L) at (-2.35,4.75);
\coordinate (N21R) at (2.35,4.75);

\coordinate (N12L) at (-5.10,1.70);
\coordinate (N12R) at (-2.75,2.65);

\coordinate (N23L) at (-0.85,3.00);
\coordinate (N23R) at (1.55,3.00);

\coordinate (N13L) at (2.85,1.65);
\coordinate (N13R) at (4.55,1.65);

\coordinate (N22L) at (-6.35,-0.95);
\coordinate (N22R) at (4.55,-0.95);


\coordinate (T22L) at (-6.65,-2.20);

\coordinate (T12La) at (-5.55,0.0);
\coordinate (T12Lb) at (-4.50,0.0);

\coordinate (T12Ra) at (-3.05,1.00);
\coordinate (T12Rb) at (-2.05,1.05);

\coordinate (T23La) at (-1.20,1.30);
\coordinate (T23Lb) at (-0.25,1.30);

\coordinate (T23Ra) at (1.55,1.30);

\coordinate (T13La) at (2.55,0.25);
\coordinate (T13Lb) at (3.35,0.25);

\coordinate (T13Rb) at (5.35,0.25);

\coordinate (T22Ra) at (3.95,-2.25);
\coordinate (T22Rb) at (5.10,-2.25);


\draw[infoset] (0,4.65) ellipse [x radius=3.05, y radius=0.42];
\node[player] at (0,4.63) {$2.1$};

\begin{scope}[rotate around={22:(-3.92,2.18)}]
  \draw[infoset] (-3.92,2.18) ellipse [x radius=1.95, y radius=0.52];
\end{scope}
\node[player] at (-4.15,2.38) {$1.2$};

\draw[infoset] (0.35,3.03) ellipse [x radius=1.55, y radius=0.42];
\node[player] at (0.35,3.02) {$2.3$};

\draw[infoset] (3.70,1.65) ellipse [x radius=1.20, y radius=0.36];
\node[player] at (3.70,1.65) {$1.3$};

\draw[infoset] (-0.90,-0.95) ellipse [x radius=5.80, y radius=0.42];
\node[player] at (-0.90,-0.95) {$2.2$};


\node[decision] at (N11) {};
\node[decision] at (N21L) {};
\node[decision] at (N21R) {};
\node[decision] at (N12L) {};
\node[decision] at (N12R) {};
\node[decision] at (N23L) {};
\node[decision] at (N23R) {};
\node[decision] at (N13L) {};
\node[decision] at (N13R) {};
\node[decision] at (N22L) {};
\node[decision] at (N22R) {};


\node[player] at ($(N11)+(0,0.45)$) {$1.1$};

\draw[branch] (N11) -- node[act, above left, pos=0.45] {$a_1$} (N21L);
\draw[branch] (N11) -- node[act, above right, pos=0.45] {$b_1$} (N21R);


\draw[branch]
(N21L) to[out=-155,in=105]
node[act, below, pos=0.55] {$a_2$}
(N22L);

\draw[branch] (N21L) -- node[act, right, pos=0.45] {$b_2$} (N12R);

\draw[branch] (N21R) -- node[act, left, pos=0.42] {$b_2$} (N23R);
\draw[branch] (N21R) -- node[act, right, pos=0.42] {$a_2$} (N13R);


\draw[branch] (N22L) -- node[act, above left, pos=0.48] {$b'_2$} (N12L);
\draw[branch] (N22L) -- node[act, left, pos=0.45] {$a'_2$} (T22L);

\draw[branch] (N12L) -- node[act, left, pos=0.45] {$a'_1$} (T12La);
\draw[branch] (N12L) -- node[act, right, pos=0.45] {$b'_1$} (T12Lb);

\draw[branch] (N12R) -- node[act, left, pos=0.45] {$a'_1$} (T12Ra);
\draw[branch] (N12R) -- node[act, right, pos=0.45] {$b'_1$} (N23L);

\draw[branch] (N23L) -- node[act, left, pos=0.45] {$a''_2$} (T23La);
\draw[branch] (N23L) -- node[act, right, pos=0.45] {$b''_2$} (T23Lb);

\draw[branch] (N23R) -- node[act, left, pos=0.45] {$a''_2$} (T23Ra);
\draw[branch] (N23R) -- node[act, right, pos=0.45] {$b''_2$} (N13L);

\draw[branch] (N13L) -- node[act, left, pos=0.45] {$a''_1$} (T13La);
\draw[branch] (N13L) -- node[act, right, pos=0.45] {$b''_1$} (T13Lb);

\draw[branch] (N13R) -- node[act, left, pos=0.45] {$a''_1$} (N22R);
\draw[branch] (N13R) -- node[act, right, pos=0.45] {$b''_1$} (T13Rb);

\draw[branch] (N22R) -- node[act, left, pos=0.45] {$a'_2$} (T22Ra);
\draw[branch] (N22R) -- node[act, right, pos=0.45] {$b'_2$} (T22Rb);


\node[terminal] at (T22L) {};
\node[terminal] at (T12La) {};
\node[terminal] at (T12Lb) {};
\node[terminal] at (T12Ra) {};
\node[terminal] at (T23La) {};
\node[terminal] at (T23Lb) {};
\node[terminal] at (T23Ra) {};
\node[terminal] at (T13La) {};
\node[terminal] at (T13Lb) {};
\node[terminal] at (T13Rb) {};
\node[terminal] at (T22Ra) {};
\node[terminal] at (T22Rb) {};

\end{tikzpicture}
\caption{The above game is a two player game with perfect recall that is neither weakly nor strongly time-strucutured}
  \label{fig:trees3}
\end{figure}

\begin{figure}[htbp]
  \centering
\begin{tikzpicture}[
    x=1cm,
    y=1cm,
    >=Stealth,
    line/.style={->, line width=0.9pt, draw=blue!35!black},
    dashnoarrow/.style={
        dashed,
        line width=0.9pt,
        draw=blue!35!black,
        dash pattern=on 4pt off 4pt
    },
    lab/.style={font=\Large\bfseries, text=black},
    num/.style={font=\Large\bfseries, text=black},
    leftlab/.style={lab, pos=0.50, xshift=-10pt, yshift=8pt},
    rightlab/.style={lab, pos=0.50, xshift=10pt, yshift=8pt}
]

\node[num] at (6.85,6.55) {$0.3 < 1.1 < 0.2 < 0.3$};

\coordinate (Ltop)      at (2.80,4.80);
\coordinate (Lleft)     at (0.50,2.55);
\coordinate (Lmid)      at (3.55,3.55);
\coordinate (Lmidleft)  at (2.25,1.55);
\coordinate (Lmidright) at (5.10,1.30);

\coordinate (Rtop)      at (10.25,5.65);
\coordinate (Rleft)     at (8.65,3.55);
\coordinate (Rright)    at (12.05,3.55);

\coordinate (Rlefta)    at (7.75,1.35);
\coordinate (Rleftb)    at (10.05,0.95);

\coordinate (Rrighta)   at (11.10,0.55);
\coordinate (Rbottom)   at (13.25,0.75);

\coordinate (Rbottoma)  at (12.35,-1.25);

\node[num] at ($(Ltop)+(0.10,0.35)$) {$0.3$};

\draw[line] (Ltop) -- node[leftlab]  {a} (Lleft);
\draw[line] (Ltop) -- node[rightlab] {b} (Lmid);

\draw[line] (Lmid) -- node[leftlab]  {a} (Lmidleft);
\draw[line] (Lmid) -- node[rightlab] {b} (Lmidright);

\node[num] at ($(Rtop)+(0,0.35)$) {$0.1$};

\draw[line] (Rtop) -- node[leftlab]  {a} (Rleft);
\draw[line] (Rtop) -- node[rightlab] {b} (Rright);

\draw[line] (Rleft) -- node[leftlab]  {a} (Rlefta);
\draw[line] (Rleft) -- node[rightlab] {b} (Rleftb);

\draw[line] (Rright) -- node[leftlab]  {a} (Rrighta);
\draw[line] (Rright) -- node[rightlab] {b} (Rbottom);

\node[num] at ($(Rbottom)+(0.25,0.35)$) {$0.2$};

\draw[line] (Rbottom) -- node[leftlab] {a} (Rbottoma);

\draw[dashnoarrow] (Lmid) -- (Rright);
\node[num] at ($(Lmid)!0.38!(Rright)+(0,0.30)$) {$1.1$};

\draw[line]
    (Rbottom)
    .. controls (8.25,0.85) and (8.00,3.40) ..
    (Ltop);

\node[lab] at (12.15,0.95) {b};

\end{tikzpicture}
\caption{The above is a game that is not weakly time-structured and that violates perfect recall}
  \label{fig:trees}
\end{figure}

\subsection*{Debreu‑style continuous representation}
\begin{theorem}
Let $V$ be a second‑countable metric space and $\prec\subseteq V\times V$
a strict partial order satisfying:
\begin{enumerate}
  \item $\prec$ is acyclic,
  \item $\prec$ is topologically closed in $V\times V$,
  \item there exists a countable order‑dense subset $D\subset V$.
\end{enumerate}
Then there exists a continuous $f:V\to\mathbb R$ such that
$x\prec y\Rightarrow f(x)<f(y)$.
\end{theorem}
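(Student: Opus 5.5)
The plan is to build $f$ as a uniformly convergent series of continuous, order‑preserving functions indexed by pairs from the countable order‑dense set $D=\{d_1,d_2,\dots\}$. Order density supplies the strictness. Closedness of the order supplies the continuous building blocks.

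\textbf{Reduction.} Write $\preceq$ for the reflexive closure of $\prec$. Since $V$ is metric, the diagonal is closed, so hypothesis 2 makes $\preceq$ closed in $V\times V$. Hence the up‑sets ${\uparrow}d_i=\{x: d_i\preceq x\}$ and down‑sets ${\downarrow}d_i=\{x:x\preceq d_i\}$ are closed. Let $P=\{(i,j): d_i\prec d_j\}$, a countable index set. For each $(i,j)\in P$ the closed sets ${\downarrow}d_i$ and ${\uparrow}d_j$ are disjoint. Indeed, a common point $z$ would give $d_j\preceq z\preceq d_i\prec d_j$, a cycle excluded by hypothesis 1.

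\textbf{Key lemma.} For each $(i,j)\in P$ I want a continuous $g_{ij}:V\to[0,1]$ that is increasing ($x\preceq y\Rightarrow g_{ij}(x)\le g_{ij}(y)$), equals $0$ on ${\downarrow}d_i$, and equals $1$ on ${\uparrow}d_j$. This is an ordered Urysohn lemma in the sense of Nachbin. I would prove it by the standard dyadic construction. One builds closed decreasing sets $A_r$ and open decreasing sets $U_r$ for dyadic $r\in(0,1)$, with ${\downarrow}d_i\subseteq A_r\subseteq U_r\subseteq A_s$ for $r<s$, all disjoint from ${\uparrow}d_j$. Then $g_{ij}(x)=\inf\{r: x\in A_r\}$, capped at $1$.

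The inductive step requires a \emph{monotone normality} property: disjoint closed decreasing and closed increasing sets must be separated by an open decreasing set whose closure is still decreasing and avoids the increasing set. Here I will use order density together with closedness and second countability. Given $F$ closed decreasing, disjoint from $K$ closed increasing, and both trapped between $d_i$ and $d_j$, I insert an element of $D$ strictly between them and take the complement of the up‑set it generates, then enlarge by metric neighbourhoods. \emph{This step is the main obstacle.} Nachbin's lemma is classically proved for compact ordered spaces, so here one must check carefully that the decreasing hull of a closed set remains closed. I expect to need an extra argument, or a mild strengthening using second countability, at exactly this point. If the direct route fails, the fallback is to intersect with the $\sigma$‑compact exhaustion available in the applications.

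\textbf{Assembly.} Enumerate $P$ as $p_1,p_2,\dots$ and set $f=\sum_k 2^{-k} g_{p_k}$. Uniform convergence (Weierstrass M‑test) gives continuity, and each term is increasing, so $f$ is weakly increasing.

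For strictness, let $x\prec y$. Order density gives $d_i$ with $x\prec d_i\prec y$, and applying it again gives $d_j$ with $d_i\prec d_j\prec y$. Then $(i,j)\in P$, $x\in{\downarrow}d_i$ and $y\in{\uparrow}d_j$, so $g_{ij}(x)=0<1=g_{ij}(y)$. All other terms satisfy $g(x)\le g(y)$, hence $f(x)<f(y)$.
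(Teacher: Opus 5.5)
Everything outside your key lemma is sound. The order $\preceq$ is closed, ${\downarrow}d_i\cap{\uparrow}d_j=\emptyset$ when $d_i\prec d_j$, and given the $g_{ij}$ your series is continuous and strictly increasing exactly as you argue. The gap is the step you flagged, the ordered Urysohn lemma for ${\downarrow}d_i$ and ${\uparrow}d_j$, and it cannot be filled: under the stated hypotheses that lemma fails, and so does the theorem.

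Here is a counterexample. Let $V\subset\mathbb{R}^2$ be the countable set of points $s=(0,0)$, $s_{n,m}=(\tfrac1n,\tfrac1{nm})$, $p=(0,5)$, $a_n=(\tfrac1n,5)$, $b_n=(10n,10)$ and $c_{n,m}=(10n+\tfrac1m,10)$ for $n,m\ge1$. Add, for each generating pair $(x,y)$ among $(p,s)$, $(b_n,a_n)$, $(s_{n,m},c_{n,m})$, a fresh set $Q_{xy}$ of lattice points $(-10k,-10j)$, $j\ge1$, using a different $k$ for each pair. Order $Q_{xy}$ like $\mathbb{Q}$ and place it strictly between $x$ and $y$. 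Let $\prec$ be the union of the resulting linear orders on the pairwise disjoint sets $\{x\}\cup Q_{xy}\cup\{y\}$. This is a strict partial order, and $D=V$ is countable and order-dense. The relation $\prec$ is closed in $V\times V$ for three reasons: the lattice points form a closed discrete set; pairs $(b_n,a_n)$ or $(s_{n,m},c_{n,m})$ with $n$ unbounded cannot converge, because $b_n$ and $c_{n,m}$ escape to infinity; and for fixed $n$ the points $s_{n,m}$ converge to $(\tfrac1n,0)\notin V$.

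Yet every continuous, merely weakly increasing $g$ satisfies $g(p)\ge g(s)$. Fix $\varepsilon>0$. Since $|s_{n,m}-s|\le\sqrt2/n$ uniformly in $m$, for all large $n$ and all $m$ we get $g(c_{n,m})\ge g(s_{n,m})>g(s)-\varepsilon$. Letting $m\to\infty$ gives $g(b_n)\ge g(s)-\varepsilon$, hence $g(a_n)\ge g(s)-\varepsilon$. Letting $n\to\infty$ gives $g(p)\ge g(s)-\varepsilon$. So no $g_{ij}$ exists for $(d_i,d_j)=(p,s)$, and no continuous $f$ has $f(p)<f(s)$.

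This $V$ is countable, hence $\sigma$-compact, so your fallback does not rescue the argument. What fails is local compactness at $s$, in exactly the form you feared. The increasing hull of the closed neighbourhood $\{x\in V:|x-s|\le r\}$ is not closed, since its closure contains $b_n$ for all large $n$. A correct statement therefore needs an additional hypothesis, with local compactness the natural candidate, rather than a cleverer construction.

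The paper's proof does not get past this point either. It asserts that $L_i=\{x:x\prec d_i\}$ and $R_i=\{x:d_i\prec x\}$ are open because the graph of $\prec$ is closed. Closedness of the graph only makes these sections closed, as you correctly note. Its recursive construction of the $U_q$ and the continuity of $f$ are asserted rather than verified. Your proposal locates the difficulty more accurately than the paper's sketch, but neither argument can be completed under the hypotheses as stated.
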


\begin{proof}
The proof adapts Debreu's separability argument for preorders to strict
partial orders.

Let $D=\{d_1,d_2,\dots\}$ be order‑dense. For each $d_i$ define

\[
L_i:=\{x\in V: x\prec d_i\},\qquad R_i:=\{x\in V: d_i\prec x\}.
\]

Because the graph of $\prec$ is closed, the sets $L_i$ and $R_i$ are
open: if $x\prec d_i$ then by closedness there exists a neighborhood $U$
of $x$ such that for all $u\in U$ we still have $u\prec d_i$. Thus $L_i$
is open; similarly for $R_i$.

Enumerate rationals $\mathbb Q=\{q_1,q_2,\dots\}$. We will construct open
sets $U_{q_k}$ recursively so that:
\begin{enumerate}[label=(\roman*)]
  \item $U_{q_k}\subseteq U_{q_{k'}}$ whenever $q_k<q_{k'}$,
  \item for every $x\prec y$ there exist rationals $q<q'$ with $x\in U_q$
    and $y\notin\overline{U_q}$.
\end{enumerate}

Initialize $U_{q_1}=\varnothing$. At stage $k$, suppose $U_{q_j}$ has been
defined for $j<k$ satisfying monotonicity. Consider the set of ordered
pairs $(x,y)$ not yet separated by earlier $U_{q_j}$. For each such pair
pick $d_i\in D$ with $x\prec d_i\prec y$ (order‑density). Using second
countability, choose for each such $d_i$ a small open neighborhood $W_i$
contained in $R_i$ and disjoint from the closure of the current
$U_{q_{k-1}}$. Let $U_{q_k}$ be the union of $U_{q_{k-1}}$ with finitely
many such neighborhoods $W_i$ chosen so that the new $U_{q_k}$ separates
a finite but increasing set of previously unseparated pairs. Because the
rationals are countable and second countability allows us to refine
neighborhoods, this recursive construction can be carried out so that
every ordered pair is eventually separated at some stage.

Define $f(x)=\inf\{q\in\mathbb Q: x\in U_q\}$. By construction $f(x)<f(y)$
whenever $x\prec y$. Standard refinements of the construction (shrinking
neighborhoods and using second countability) ensure $f$ is continuous.
\end{proof}

\subsection*{Borel measurable labeling in Polish spaces}
\begin{theorem}
Let $G=(V,E)$ be a topological informational digraph with $V$ Polish and
$E$ Borel. Suppose the quotient of $V$ by strongly connected components
is acyclic and the quotient space is standard Borel. Then there exists a
Borel measurable $f:V\to\mathbb R$ with $(u,v)\in E\Rightarrow f(u)<f(v)$.
\end{theorem}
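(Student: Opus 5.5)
The plan is to reduce to the quotient, build a Borel map on the quotient that is strictly increasing along the generated order, and compose with the quotient map. First I read the hypotheses as stated. Let $q:V\to\widetilde V:=V/\!\sim$ be the SCC quotient map, where $\sim$ is mutual reachability as in the Notation paragraph. Give $\widetilde V$ the induced digraph $\widetilde E=\{([u],[v]):(u,v)\in E\}$.

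\emph{Acyclic} must mean that $\widetilde E$ has no directed cycles, including loops. Every edge $(u,v)\in E$ with $u\sim v$ produces a loop $([u],[u])$, so this hypothesis forces $[u]\neq[v]$ for every $(u,v)\in E$. This is exactly what makes a strict inequality $f(u)<f(v)$ possible on \emph{every} edge. In effect, edges never lie inside a class, so nontrivial classes can arise only from longer cycles, and acyclicity of the quotient excludes those as well. I will therefore aim for a Borel $\tilde f:\widetilde V\to\mathbb R$ with $\tilde f(x)<\tilde f(y)$ whenever $(x,y)\in\widetilde E$, and set $f=\tilde f\circ q$.

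Step 1: the quotient order and measurability. Let $\widetilde R$ be the reachability relation on $\widetilde V$, that is, the transitive closure of $\widetilde E$. It is the countable union over $n$ of $n$-step walk relations, each a projection of a Borel set, so $\widetilde R$ is analytic. It is a partial order by the Well‑definedness proposition (antisymmetry on classes). I also check that $q$ is Borel, using the assumption that $\widetilde V$ is standard Borel, so that $f=\tilde f\circ q$ will be Borel once $\tilde f$ is.

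Step 2: a Borel linearization by measurable selection, following the sketch given after Theorem~\ref{thm:polish}. I would define a transfinite layering. Let $M_0$ be the set of $\widetilde R$-minimal points. Given $M_\beta$ for all $\beta<\alpha$, let $M_\alpha$ be the minimal points of $\widetilde V\setminus\bigcup_{\beta<\alpha}M_\beta$. Acyclicity guarantees that each nonempty remainder meets this rank structure. I then apply Kuratowski--Ryll-Nardzewski to pick Borel representatives, which is what lets me arrange each layer in a Borel way.

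Step 3: assign the labels. I would label by a strictly increasing sequence of rationals indexed by the ranks, placed through an order embedding of a countable ordinal into $\mathbb Q$. Every edge goes from a lower rank to a higher rank, so strictness along $\widetilde E$ follows.

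The main obstacle is Step 2. I need the rank to be Borel and to close off at a countable ordinal, since only countable ordinals embed into $\mathbb Q$. The risk is that $\widetilde R$ is merely analytic, so the set of minimal points is co-analytic, and the rank could run through all of $\omega_1$. My first attempt is to bound the rank by Kunen–Martin: the relation $\widetilde E$ restricted to the remainder is a well-founded analytic relation, so its rank is countable. A boundedness argument should then make every $M_\alpha$ Borel.

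If this fails, the fallback is to use the standard Borel structure of $\widetilde V$ directly. Transport $\widetilde V$ to $2^\omega$ and invoke a Borel linearization theorem for Borel partial orders (Harrington–Marker–Shelah type) to get a Borel order-preserving map into $(2^\omega,<_{\mathrm{lex}})$. Then compose with $x\mapsto\sum_i 2x_i3^{-i}$, which is continuous and strictly lex-increasing, to land in $\mathbb R$. I would check carefully which of these two routes survives under exactly the stated hypotheses. The paper's measurable-selection route depends on the rank argument, so that is where I expect the real work to lie.
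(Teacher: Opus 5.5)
There is a genuine gap, and it cannot be closed, because the statement is false even under your reading. Your reading of the hypothesis is the right one: acyclicity of the quotient must exclude loops, which forces $G$ itself to be acyclic and the SCCs to be singletons. Under any weaker reading, a single self-loop already refutes the statement. You also correctly located the crux in Step~2.

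For the counterexample, take $V=\mathbb R\times\{0,1\}$ and let $E$ be the strict lexicographic order: $((a,i),(b,j))\in E$ iff $a<b$, or $a=b$, $i=0$, $j=1$. Then $V$ is Polish, and $E$ is Borel (an open set together with a closed set). Since $E$ is a strict linear order, $G$ has no directed cycles, and the quotient is $V$ itself. Suppose $f$ were strictly increasing along $E$. The intervals $(f(a,0),f(a,1))$, $a\in\mathbb R$, would be nonempty and pairwise disjoint, since $f(a,1)<f(b,0)$ whenever $a<b$. That gives uncountably many disjoint open intervals in $\mathbb R$, so no such $f$ exists, Borel or not. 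What is missing is a separability condition of Debreu type, and nothing in the hypotheses supplies one.

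The same example kills your fallback. Harrington--Marker--Shelah/Kanovei-type Borel linearizations land in $(2^{\xi},<_{\mathrm{lex}})$ for some countable ordinal $\xi$, not in $2^{\omega}$. For $\xi\ge\omega+1$ that order contains an order-isomorphic copy of $(V,E)$ above, so nothing composes into $\mathbb R$.

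Your primary route has two further, independent defects.
\begin{itemize}
\item Acyclicity does not give well-foundedness. $V=\mathbb R$ with $E=\{(a,b):a<b\}$ has no minimal points, so $M_0=\emptyset$, the layering never starts, and Kunen--Martin has nothing to act on.
\item Even for well-founded Borel graphs the layers need not be Borel. Let $A\subseteq Y$ be co-analytic but not Borel in a Polish space $Y$, and write $Y\setminus A$ as the projection of a Borel $B\subseteq Y\times\omega^{\omega}$. On $Y\sqcup(Y\times\omega^{\omega})$, put an edge $(y,w)\to y$ for each $(y,w)\in B$. This Borel graph has no walks of length two, yet $M_0\cap Y=A$, so no boundedness argument makes $M_0$ Borel.
\end{itemize}

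The paper's own proof does not avoid these problems. It asserts without proof that sets of minimal elements are Borel, which the last example refutes. Its step of enumerating $\widetilde V$ ``by iteratively removing selected minimal elements'' and embedding it in $\mathbb Q$ presupposes that $\widetilde V$ is countable and well-founded. So following the paper's measurable-selection route more closely would not rescue your argument.

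A true statement needs an additional hypothesis. Two options:
\begin{itemize}
\item Suppose there is a countable $D=\{d_n\}$ such that $x\prec y$ implies $x\preceq d\prec y$ for some $d\in D$ (with $\prec$ strict reachability), and each set $\{x:d_n\prec x\}$ is Borel. Then $f(x)=\sum_{n:\,d_n\prec x}2^{-n}$ is Borel and strictly increasing along $E$: the witness $d$ is counted at $y$ but not at $x$.
\item If $\widetilde V$ is countable, take a linear extension of the reachability order and embed it in $\mathbb Q$ to get $\tilde f$. This $\tilde f$ is automatically Borel, since every subset of a countable standard Borel space is Borel.
\end{itemize}
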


\begin{proof}[Proof]
Collapse SCCs to obtain a standard Borel quotient $\widetilde V$ with a
Borel acyclic partial order. For nonempty Borel subsets of $\widetilde V$
the set of minimal elements is Borel. Apply Kuratowski–Ryll‑Nardzewski to
select minimal elements in a Borel way and enumerate $\widetilde V$ by
iteratively removing selected minimal elements. Assign rational labels in
increasing order to the enumeration to obtain a Borel order embedding
$\widetilde f:\widetilde V\to\mathbb Q$. Lift to $f:V\to\mathbb R$ by
$f(v)=\widetilde f([v])$.
\end{proof}

\subsection*{Finite approximation and limit uniqueness}
\begin{proposition}
Let $G=(V,E)$ be a countable informational digraph admitting an increasing sequence of finite (or otherwise simpler) subgraphs

\[
G_1 \subseteq G_2 \subseteq \cdots
\]

whose union is \(G\), i.e.

\[
\bigcup_{n=1}^\infty G_n = G.
\]
 Suppose each $G_n$ has a unique transitive reduction
and there exists $L$ such that every walk in $G$ has length at most $L$.
Then the transitive reduction of $G$ is unique.
\end{proposition}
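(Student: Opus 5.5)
The key consequence of the uniform walk bound is that \(G\) is acyclic. Any directed cycle, including a self-loop or a two-cycle, could be traversed repeatedly to give walks of unbounded length. So every walk has length at most \(L\) and no vertex repeats along a walk.

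The plan is to reduce the statement to the countable acyclic case already treated in the Appendix (Lemma on existence of maximal walks and Lemma on unique transitive reduction), and to use the approximating sequence \(G_n\) only as an auxiliary check. I will argue directly with the explicit candidate
\[
F:=\{(u,v)\in E:\ \nexists\, w\in V\setminus\{u,v\}\ \text{with } uR_G w \text{ and } wR_G v\},
\]
in two steps.

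\textbf{Step 1.} I first record that acyclicity holds, as above. I then note that the bound \(L\) makes the maximal-walk lemma elementary, with no appeal to Zorn's Lemma. Every walk from \(u\) to \(v\) has at most \(L+1\) vertices, so among all walks from \(u\) to \(v\) I can pick one of maximal length. Such a walk cannot be properly refined. If two consecutive vertices \(I_j,I_{j+1}\) admitted some \(w\) with \(I_jR_G w\) and \(wR_G I_{j+1}\), splicing in walks through \(w\) would give a strictly longer walk from \(u\) to \(v\). That longer walk is still a walk, because acyclicity rules out repeating \(I_j\) or \(I_{j+1}\), and this contradicts maximality. Hence every consecutive pair of a longest walk lies in \(F\), and \(R_{(V,F)}=R_G\).

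\textbf{Step 2.} Uniqueness follows by the argument of the Appendix lemma. Let \(F'\subseteq V\times V\) be any edge set whose transitive closure is \(R_G\), and suppose \((u,v)\in F\setminus F'\). Then some \(F'\)-path from \(u\) to \(v\) has length at least \(2\), and any interior vertex \(w\) of that path satisfies \(uR_Gw\) and \(wR_Gv\), contradicting \((u,v)\in F\). So \(F\subseteq F'\) for every generating set. Minimality of \(F\) follows because deleting \((u,v)\in F\) leaves no alternative route from \(u\) to \(v\). Hence \(F\) is the unique transitive reduction.

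\textbf{Role of the \(G_n\) and main obstacle.} To connect with the hypothesis on the \(G_n\), I would also show that \(F\) is the stabilized limit of the reductions \(F_n\). For a fixed pair \((u,v)\), all relevant walks have length at most \(L\). Whether \((u,v)\in F\) is decided by the existence of a single finite witness \(w\) together with two finite walks. Such a witness eventually appears in some \(G_n\), and once it appears the edge \((u,v)\) is excluded from every later \(F_n\); if no witness ever appears, \((u,v)\) stays in \(F_n\) from the stage where the edge itself appears. Therefore \(F=\liminf F_n=\limsup F_n\). This limit identification is the step I expect to be most delicate, because membership in \(F_n\) is not monotone in \(n\): an edge can be present early and removed later. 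The bound \(L\) is what makes the witness finite and the limit well behaved. The uniqueness claim itself does not depend on this step, since it follows from Steps 1–2 alone.
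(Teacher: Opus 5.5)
Your proof is correct, but it takes a different route from the paper's. The paper argues by stabilization along the approximating sequence. Using the bound $L$, it asserts that reachability for each pair is decided inside some finite stage $G_n$, so redundancy decisions settle at a finite stage. It then takes the union of the finite-stage reductions as the reduction of $G$.

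You instead argue directly on $G$:
\begin{itemize}
\item the walk bound gives acyclicity;
\item a longest walk, which exists because lengths are at most $L$, replaces the Zorn's-lemma maximal-walk lemma and shows that the explicit set $F$ generates $R_G$;
\item the interior-vertex argument shows that $F$ lies in every generating set.
\end{itemize}

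Your route shows that the hypotheses on the $G_n$ are redundant. Every $G_n$ inherits the bound $L$, so each automatically has a unique reduction, and uniqueness for $G$ follows from $L$ alone. It also locates the one place where $L$ is really needed, namely that $F$ generates $R_G$. That is exactly what fails for $\mathbb Q\cap[0,1]$, where every finite stage is fine but $G$ has no reduction at all.

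Your pointwise formulation $F=\liminf F_n=\limsup F_n$ is the correct version of the paper's limit claim. A literal union $\bigcup_n F_n$ need not be minimal. For example, take edges $(a,b),(b,c),(a,c)$ and a first stage containing only $a,c$: the edge $(a,c)$ lies in $F_1$ but not in $F$. Your argument also needs only one finite witness per pair. The paper tacitly assumes that all vertices within distance $L$ of $u,v$ eventually lie in a single finite $V_n$, which requires local finiteness.

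The paper's approach, in turn, ties the result to finite-stage computation and to its enumeration algorithm.

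One small correction to your own commentary: the limit identification does not use $L$ at all. Any witness is a finite walk by definition, and $R_{G_n}\subseteq R_G$ handles the no-witness case. So that step is not delicate; $L$ enters only through the existence of longest walks.
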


\begin{proof}
For any pair $(u,v)$ reachability depends only on vertices within graph
distance $L$ of $u$ and $v$. For sufficiently large $n$ these vertices lie
in $V_n$, so the reachability relation stabilizes and the minimality
decisions for edges are determined at finite stage. The union of finite
stage minimal edge sets yields the unique transitive reduction of $G$.
\end{proof}

\subsection*{Algorithm correctness for recursively presented graphs}
\begin{proposition}
If $G$ is finite then the enumeration algorithm that marks edges redundant
when an intermediate witness is discovered terminates and returns the
unique transitive reduction. If $G$ is countable and the transitive
reduction is finite and recursively enumerable, the algorithm stabilizes
and returns it.
\end{proposition}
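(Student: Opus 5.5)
The plan is to treat the two claims separately and, in both, to reason about the algorithm's state at a finite enumeration stage $s$. Write $V_s,E_s$ for the vertices and edges discovered by stage $s$, and $X_s$ for the discovered quotient (merging $\sim_S$-classes as symmetric edges and self-loops appear). The candidate set $F_s$ consists of the class edges $(x,y)$ of $X_s$ for which no discovered intermediate $z$ with $x\,R\,z\,R\,y$ has yet been found. The central invariant to establish is \emph{monotonicity}. Discovering more vertices and edges can only merge classes and add reachability pairs. Consequently, once an edge is marked redundant by a witness $z$, that witness persists: reachability through $z$ survives merges, except when $z$ is itself merged into $x$ or $y$, and then the edge becomes internal or reappears and must be rechecked. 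Candidate status can therefore only move from ``kept'' to ``redundant'', apart from the finitely many re-evaluations caused by merges.

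\textbf{Finite case.} If $G$ is finite, there is a stage $s_0$ at which $V_{s_0}=V$ and $E_{s_0}=E$. From then on, $X_{s_0}=X$ and reachability computed on the discovered quotient equals $R_G$ at class level. The candidate set $F_{s_0}$ is then exactly the set of class edges with no intermediate class. Since $X$ is finite and (after collapsing) acyclic, this set is the cover relation $C$. By Theorem~\ref{thm:generalized-finite-countable}(3), $C$ is the unique transitive reduction. Termination holds because the enumeration is finite, so the procedure halts once the enumeration is exhausted; the stabilization test is then unnecessary. Lifting to $V$ proceeds as in the appendix's finite uniqueness theorem, by choosing one witnessing edge per cover pair.

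\textbf{Countable case.} Let $F^\ast$ be the true reduction, assumed finite, and use the countable uniqueness lemma (the set $F$ of edges with no intermediate witness). I would argue in two steps. First, for each $(x,y)\in F^\ast$ there is a finite stage after which both classes and the edge are discovered. The edge is then never marked redundant, because no true witness exists and any witness found at a finite stage is a genuine one, since discovered reachability is sound. Second, every class edge not in $F^\ast$ has a genuine witness $z$ together with finite walks $x\to z\to y$. Each of these appears at some finite stage, after which the edge is permanently removed.

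\textbf{Main obstacle.} The hard step is showing that $F_s$ stabilizes \emph{globally}, since infinitely many spurious candidate edges may each be removed at a different stage. My first attempt is to note that the hypothesis ``$F^\ast$ is finite'' together with the reduction generating $\prec$ forces only finitely many classes to carry non-internal edges. Each edge in $E$ between classes is either in $F^\ast$ or is redundant via a chain of $F^\ast$-edges, so only finitely many class pairs are relevant, and the relevant reachability is witnessed within a finite subgraph. Taking $s_1$ to be a stage at which that finite subgraph and all merges among its vertices have appeared gives $F_s=F^\ast$ for all $s\ge s_1$. If this counting fails, I would weaken the conclusion to convergence in the limit, $F_s\to F^\ast$ pointwise, and interpret the ``stabilization'' output rule as a limit-computable (Gold-style) procedure rather than certified halting. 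Recursive enumerability of $F^\ast$ is what makes the limit identification effective.
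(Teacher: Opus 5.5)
Your plan follows the paper's own proof. That proof consists of exactly the two observations in your countable case (each edge of $F^\ast$ and each redundancy witness appears at a finite stage), followed by the unargued assertion that the candidate set ``thereafter stabilizes''. You are right that global stabilization is the crux, but your counting argument does not settle it, and in general the strict claim is false.

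\textbf{Why the counting argument fails.} Finiteness of $F^\ast$ bounds the number of true \emph{classes} carrying non-internal edges. It does not bound the number of raw vertices or edges. Meanwhile the algorithm works on the \emph{discovered} quotient, which is finer than $X$ until the relevant symmetric edges have been enumerated. So vertices outside your finite subgraph keep arriving after $s_1$, and they cause errors before their merging edges appear.
\begin{itemize}
\item Let $G$ be the disjoint union of two-cycles $a_i\rightleftarrows b_i$ ($i\in\mathbb N$), enumerated as $(a_1,b_1),(b_1,a_1),(a_2,b_2),\dots$. Then $X$ consists of isolated classes and $F^\ast=\varnothing$. Yet between the arrival of $(a_i,b_i)$ and that of $(b_i,a_i)$ the candidate set contains $(\{a_i\},\{b_i\})$. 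Hence $F_s\neq F^\ast$ for infinitely many $s$.
\item Let $x=\{x_1,x_2,\dots\}$ with $x_1\rightleftarrows x_i$ and $x_i\to w$ for all $i$, so $F^\ast=\{(x,[w])\}$. Whenever $(x_1,x_i)$ and $(x_i,w)$ have been seen but $(x_i,x_1)$ has not, the unmerged fragment $\{x_i\}$ serves as an intermediate class. The genuine edge is then marked redundant.
\end{itemize}
The second example also breaks your Step~1 claim that ``any witness found at a finite stage is a genuine one''. Discovered reachability is sound on vertices, but a discovered intermediate class can be a fragment of an endpoint class. With infinite classes, the ``finitely many re-evaluations caused by merges'' in your invariant become infinitely many.

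\textbf{What your argument does prove.} It covers the case where $G$ has only finitely many non-loop edges. Once they are all enumerated, every merge has occurred and discovered reachability equals $R_G$, so $F_s$ equals the cover relation $F^\ast$ from then on. If $G$ has no symmetric pairs (the setting of the appendix's countable lemmas), finiteness of $F^\ast$ implies this. Indeed, every edge $(u,v)$ with $u\ne v$ lies on an $F^\ast$-path, so $E\subseteq W\times W$ for the finite set $W$ of endpoints of $F^\ast$.

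In general only your fallback holds: pointwise (limit) convergence. No fixed waiting bound in the output rule can certify it. You should therefore either add a finiteness hypothesis of this kind or state the limit version, rather than try to rescue the strict claim. The paper's two-line proof has the same gap.

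\textbf{Two smaller points.}
\begin{itemize}
\item In the finite case, collapsing $\sim_S$ removes only cycles of length one and two. Acyclicity of $X$ must therefore be assumed, not inferred: a three-cycle survives the collapse, and then the reduction is not unique.
\item Every finite set is recursively enumerable, so that hypothesis does no work. The effectiveness of the limit procedure comes from the recursive presentation of $V$ and $E$.
\end{itemize}
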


\begin{proof}
In the finite case enumeration completes and the algorithm performs the
standard finite transitive reduction test, returning the unique minimal
edge set. In the countable case, if the true transitive reduction $F$ is
finite and recursively enumerable, after a finite stage all edges of $F$
and witnesses for redundancy of other edges are discovered; thereafter
the candidate set stabilizes and the algorithm returns $F$.
\end{proof}

\subsection*{Measurable selection lemmas used}
\begin{theorem}[Kuratowski–Ryll‑Nardzewski]
Let $X$ be a standard Borel space and $Y$ a Polish space. If
$F:X\to 2^Y$ has nonempty closed values and Borel graph then there exists
a Borel selector $s:X\to Y$ with $s(x)\in F(x)$ for all $x$.
\end{theorem}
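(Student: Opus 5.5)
The plan is to reduce the statement to the classical Kuratowski–Ryll‑Nardzewski construction, whose only input is \emph{weak measurability} of $F$: for every open $U\subseteq Y$, the lower inverse $F^{-}(U):=\{x\in X: F(x)\cap U\neq\varnothing\}$ should be Borel. Given that, the construction is standard. Fix a complete compatible metric on $Y$ and a countable dense set $\{y_n\}$. Recursively build Borel maps $s_k:X\to\{y_n\}$ such that
\[
d\bigl(s_k(x),F(x)\bigr)<2^{-k}\quad\text{and}\quad d\bigl(s_k(x),s_{k-1}(x)\bigr)<2^{-k+1}.
\]
At each stage one takes the least index $n$ with $x\in F^{-}\bigl(B(y_n,2^{-k})\cap B(s_{k-1}(x),2^{-k+1})\bigr)$; this is a Borel condition once weak measurability holds. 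The sequence $s_k(x)$ is Cauchy, its limit $s(x)$ is a pointwise limit of Borel maps and hence Borel, and $s(x)\in F(x)$ because $F(x)$ is closed and $d(s_k(x),F(x))\to0$. Thus the whole proof rests on extracting weak measurability from the Borel‑graph hypothesis.

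For that step I would write $F^{-}(U)=\pi_X\bigl(\operatorname{gr}F\cap(X\times U)\bigr)$, where $\pi_X$ is the projection onto $X$. This exhibits $F^{-}(U)$ as the projection of a Borel subset of the Polish space $X\times Y$ (after fixing a Polish topology on $X$), so it is analytic. The first thing I would try in order to get Borelness is Souslin's theorem: show that the complement $\{x: F(x)\cap U=\varnothing\}$ is also analytic, using closedness of the sections. Concretely, $F(x)\cap U=\varnothing$ iff for every basic ball $B(y_n,r)$ whose closure lies in $U$ we have $F(x)\cap B(y_n,r)=\varnothing$. I would try to rewrite each of these conditions as a projection of a Borel set.

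As a fallback, the Jankov–von Neumann uniformization theorem applied to the Borel set $\operatorname{gr}F$ (which projects onto all of $X$, since the values are nonempty) gives a selector that is $\sigma(\boldsymbol{\Sigma}^1_1)$‑measurable, hence universally measurable. For the measure‑theoretic uses made of it in the paper, for example Theorem~\ref{thm:polish}, one could then modify this selector on a null set to obtain a Borel map for any prescribed measure.

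The main obstacle is exactly the passage from ``analytic'' to ``Borel'' for $F^{-}(U)$. Closed sections alone do not obviously make the complement analytic, because the condition ``$F(x)\cap B=\varnothing$'' is a universal quantifier over $Y$, that is, co‑analytic. I expect this step can only be completed under an additional regularity assumption: $\sigma$‑compact values, where the Arsenin–Kunugui theorem applies; countable values, where Luzin–Novikov applies; or weak measurability taken directly as a hypothesis. Otherwise one must accept a universally measurable rather than Borel selector. The rest of the argument is routine.
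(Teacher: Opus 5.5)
You have not proved the statement, and the step you single out as the obstacle cannot be completed, because the statement as written is false. The paper gives no argument of its own here. It simply cites Kuratowski--Ryll-Nardzewski, but with the wrong hypothesis.

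The actual theorem assumes weak measurability: $F^{-}(U)$ is Borel for every open $U$, equivalently $x\mapsto F(x)$ is Borel for the Effros Borel structure. For closed values this implies a Borel graph, but not conversely. Your recursive construction of the $s_k$ is a correct proof of that version, up to a constant: the triangle inequality only gives $d(s_k(x),s_{k-1}(x))<3\cdot 2^{-k}$, which still makes $(s_k(x))$ Cauchy.

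For the stated version, here is a counterexample. Co-analytic sets fail the separation property (Novikov). So there are disjoint co-analytic $P_0,P_1\subseteq\omega^\omega$ with no Borel $E$ satisfying $P_0\subseteq E$ and $E\cap P_1=\varnothing$. The sets $A_i:=\omega^\omega\setminus P_i$ are analytic and cover $\omega^\omega$. Write $A_i=\pi_X(C_i)$ with $C_i\subseteq\omega^\omega\times\omega^\omega$ closed, let $Y=\omega^\omega\times\{0,1\}$, and set
\[
F(x)=\bigl((C_0)_x\times\{0\}\bigr)\cup\bigl((C_1)_x\times\{1\}\bigr).
\]
Then $F$ has nonempty closed values and a closed graph. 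Suppose $s$ were a Borel selector. Then $E:=\{x: s(x)\in\omega^\omega\times\{1\}\}$ would be Borel with $E\subseteq A_1$ and $\omega^\omega\setminus E\subseteq A_0$. That is, $E\cap P_1=\varnothing$ and $P_0\subseteq E$, a contradiction.

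In this example $F^{-}(\omega^\omega\times\{0\})=A_0$ is analytic and not Borel. So your plan to show that the complement of $F^{-}(U)$ is analytic by using closedness of the sections cannot succeed: closed sections give no extra leverage.

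Your fallback does not rescue the statement either. Replacing the Jankov--von Neumann uniformization by a Borel map $\mu$-a.e.\ gives a Borel function that is a selector only off a $\mu$-null set. The statement asks for $s(x)\in F(x)$ for every $x$, and repairing the map on the exceptional Borel null set is the original problem again.

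So your diagnosis is right, but it should be stated as a refutation rather than as an open step. With only a Borel graph, the best general conclusion is a $\sigma(\Sigma^1_1)$-measurable (hence universally measurable) selector. A Borel selector needs an extra hypothesis, such as:
\begin{itemize}
\item weak measurability (Kuratowski--Ryll-Nardzewski),
\item $K_\sigma$ values (Arsenin--Kunugui), or
\item countable values (Luzin--Novikov).
\end{itemize}
The statement, and its use in Theorem~\ref{thm:polish}, must be amended accordingly.
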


\begin{remark}
This theorem is used to select Borel representatives of information sets
and to choose minimal elements in Borel acyclic orders.
\end{remark}

\subsection*{Properties of continuum tree construction}
\begin{proposition}
Let $g:[0,1]\to\mathbb R_{\ge0}$ be continuous with $g(0)=g(1)=0$ and
define $d_g(s,t)=g(s)+g(t)-2\min_{u\in[s\wedge t,s\vee t]}g(u)$. Then $d_g$
is a pseudo‑metric on $[0,1]$, and the quotient $T_g=[0,1]/\!\sim$ with
$s\sim t$ iff $d_g(s,t)=0$ is a compact metric real tree. The projection
$\pi:[0,1]\to T_g$ is continuous.
\end{proposition}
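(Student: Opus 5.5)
The plan is to establish the three claims in order: $d_g$ is a pseudo-metric, $T_g$ is a compact metric space, and $T_g$ is a real tree, i.e.\ $0$-hyperbolic and geodesic. The standard route (Le Gall, Duquesne--Le Gall) handles each separately.

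\emph{Pseudo-metric.} Nonnegativity follows from $m_g(s,t)\le \min(g(s),g(t))$, which gives $d_g(s,t)\ge |g(s)-g(t)|\ge 0$. Symmetry and $d_g(s,s)=0$ are immediate from the definition. For the triangle inequality $d_g(s,u)\le d_g(s,t)+d_g(t,u)$ it suffices to show
\[
m_g(s,u)\ \ge\ m_g(s,t)+m_g(t,u)-g(t).
\]
I would check this case by case on the relative position of $s,t,u$. If $t$ lies between $s$ and $u$, then $[s\wedge u,s\vee u]$ is covered by the two subintervals, so $m_g(s,u)=\min(m_g(s,t),m_g(t,u))$. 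Since both minima are at most $g(t)$, the inequality holds. If $t$ lies outside, say $s\le u\le t$, then $m_g(s,t)\le m_g(s,u)$ (the minimum is over a larger interval) and $m_g(t,u)\le g(t)$, which again gives the inequality. The remaining orderings are symmetric.

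\emph{Compactness and continuity of $\pi$.} Since $g$ is uniformly continuous, $m_g$ is continuous in $(s,t)$, and therefore $d_g$ is continuous on $[0,1]^2$. It follows that $\pi:[0,1]\to(T_g,d_g)$ is continuous: if $s_n\to s$ then $d_g(s_n,s)\to d_g(s,s)=0$. The space $T_g$ is then a continuous image of a compact set, hence compact, and it is a metric space because $\sim$ exactly identifies points at distance zero.

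\emph{Real tree.} I would use the characterization that a connected metric space satisfying the four-point condition is a real tree. Connectedness holds because $T_g$ is the continuous image of $[0,1]$. For the four-point condition, $d_g$ has the form $g(s)+g(t)-2m_g(s,t)$ with $m_g$ an ``ultrametric-type'' minimum. Given $s_1<\dots<s_4$, the three pairwise sums $d(s_i,s_j)+d(s_k,s_l)$ reduce, after cancelling the $g$-terms, to comparing $m_{12}+m_{34}$, $m_{13}+m_{24}$ and $m_{14}+m_{23}$. Using $m_{13}=\min(m_{12},m_{23})$ and similar identities for ordered points, one checks that the largest two of these three quantities coincide, which is exactly the four-point condition. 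This combinatorial verification, together with citing the correct form of the four-point/real-tree characterization, is where I expect the main effort.

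An alternative is to show directly that geodesics exist. For $s<t$, take $u^*\in[s,t]$ attaining $m_g(s,t)$, and parametrize the path by the level sets $\{r\in[s,u^*]:g(r)=\min_{[r,u^*]}g\}$ and the symmetric set on the right. One then checks that this path is isometric and that arcs are unique. I would use this only as a cross-check, not as the main argument.
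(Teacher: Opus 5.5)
Your proof is correct and is more complete than the paper's, which is only a sketch. The paper attributes both continuity of $d_g$ and the triangle inequality to continuity of $g$, deferring to the literature. It gets compactness from ``the quotient of a compact space by a closed equivalence relation'' and asserts that unique geodesics ``follow from the tree structure encoded by $g$''.

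You differ in three places. First, you actually prove the triangle inequality. Your reduction to $m_g(s,u)\ge m_g(s,t)+m_g(t,u)-g(t)$ and your case split on the position of $t$ are both right, and this fact is combinatorial rather than a consequence of continuity. Second, you obtain compactness as the image of $[0,1]$ under the continuous map $\pi$ into $(T_g,d_g)$, which gives it directly in the metric topology. The paper's argument only gives compactness of the quotient topology, and it tacitly needs your continuity of $\pi$ to identify the two topologies. Third, for the tree property you use the characterization of real trees as connected metric spaces satisfying the four-point condition, which reduces everything to a finite check on four ordered representatives. This also settles the ``no loops'' half of the definition, which an appeal to unique geodesics alone does not ($\mathbb R^2$ is uniquely geodesic but not a tree). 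Your direct geodesic construction would exhibit the arcs explicitly, but it would need a separate argument that every arc is a geodesic.

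There is one slip in the four-point step. Each sum $d_g(s_i,s_j)+d_g(s_k,s_l)$ equals $\sum_{r=1}^4 g(s_r)$ minus twice the corresponding $m$-sum. So the four-point condition says the two \emph{smallest} $m$-sums coincide, not the two largest. Take $a=m_{12}$, $b=m_{23}$, $c=m_{34}$ with $s_1\le s_2\le s_3\le s_4$. The $m$-sums are then $a+c$, $\min(a,b)+\min(b,c)$ and $\min(a,b,c)+b$. The middle one is always the smallest and equals the minimum of the other two; split into cases by which of $a,b,c$ is minimal. By contrast, $a=1$, $b=5$, $c=3$ gives $m$-sums $4,4,6$, so the largest two need not agree. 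Also use $\le$ rather than $<$ so that coincident points are covered. Ordering the representatives is harmless because the condition is invariant under permuting the four points.
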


\begin{proof}[Proof]
Continuity of $g$ implies $d_g$ is continuous and satisfies the triangle
inequality (standard in the literature on real trees). The quotient of a
compact space by a closed equivalence relation is compact; $d_g$ induces a
metric on equivalence classes. The unique geodesic property follows from
the tree structure encoded by $g$.
\end{proof}

\section*{Acknowledgements}
I thank Mr Wang Jing for his support with the diagrams.

\end{document}